\documentclass{amsart}

\usepackage{amsmath}
\usepackage{amssymb}
\usepackage{amsthm}
\usepackage{amsfonts}
\usepackage{setspace}
\usepackage{cite}
\usepackage[usenames,dvipsnames,svgnames,table]{xcolor}
\usepackage[colorlinks=true, citecolor=blue, linkcolor=Crimson]{hyperref}

\usepackage{color, etoolbox, xspace}
\newtoggle{draftmode}

\newcommand{\DNR}{\mathrm{DNR}}
\newcommand{\DNC}{\mathrm{DNC}}

\newcommand{\estr}{\langle\rangle}

\newcommand{\bstrings}{\omega^{<\omega}}
\newcommand{\dset}[2]{\{#1 : #2 \}}

\newcommand{\ML}{Martin-L\"{o}f}

\DeclareMathOperator{\upto}{\upharpoonright}
\DeclareMathOperator{\converges}{\downarrow}

\theoremstyle{plain}
\newtheorem{theorem}{Theorem}[section]

\newtheorem{corollary}[theorem]{Corollary}
\newtheorem{lemma}[theorem]{Lemma}
\newtheorem{claim}[theorem]{Claim}

\theoremstyle{definition}
\newtheorem{definition}[theorem]{Definition}


\newcommand{\pP}{\mathbb{P}}

\title[Effective bi-immunity]{Effective bi-immunity and randomness}

\author{Achilles A. Beros}
\address[Achilles A. Beros]{Department of Mathematics\\
University of Hawai`i at M{\=a}noa\\
Honolulu, HI 96822, USA}
\email{beros@math.hawaii.edu}

\author{Mushfeq Khan}
\address[Mushfeq Khan]{Department of Mathematics\\
University of Hawai`i at M{\=a}noa\\
Honolulu, HI 96822, USA}
\email{khan@math.hawaii.edu}

\author{Bj{\o}rn Kjos-Hanssen}
\thanks{This work was partially supported by a grant from the Simons Foundation (\#315188 to Bj{\o}rn Kjos-Hanssen). This material is based upon work supported by the National Science Foundation under Grant No.~1545707.}
\address[Bj{\o}rn Kjos-Hanssen]{Department of Mathematics\\
University of Hawai`i at M{\=a}noa\\
Honolulu, HI 96822, USA}
\email{bjoern.kjos-hanssen@hawaii.edu}

\newcommand{\concat} {\hat{\ }}

\renewcommand{\DNC}{\DNR}

\DeclareMathOperator{\EBI}{EBI}

\DeclareMathOperator{\MLR}{MLR}
\DeclareMathOperator{\Low}{Low}

\theoremstyle{definition}

\begin{document}

	\begin{abstract}
		We study the relationship between randomness and effective bi-immunity. Greenberg and Miller have shown that for any oracle $X$, there are arbitrarily slow-growing $\DNR$ functions relative to $X$ that compute no \ML\ random set. We show that the same holds when \ML\ randomness is replaced with effective bi-immunity. It follows that there are sequences of effective Hausdorff dimension 1 that compute no effectively bi-immune set.

		We also establish an important difference between the two properties. The class $\Low(\MLR, \EBI)$ of oracles relative to which every \ML\ random is effectively bi-immune contains the jump-traceable sets, and is therefore of cardinality continuum.
	\end{abstract}

	\maketitle

	\section{Introduction}
		Let $W_0$, $W_1$, $W_2$, ... be an effective enumeration of the recursively enumerable (or r.e.) sets of natural numbers. An infinite set $A$ of natural numbers is said to be \emph{immune} if it contains no infinite r.e. subset. It is said to be \emph{effectively immune} when there is a recursive function $f$ such that for all $e$, if $W_e$ is a subset of $A$, then $|W_e| \le f(e)$. The interest in sets whose immunity is effectively witnessed in this manner originally arose in the search for a solution to Post's problem.

		The complement of an effectively immune set, if it is r.e., is called \emph{effectively simple}. Smullyan \cite{Smullyan} appears to be the first to explicitly isolate the notion, observing that Post's construction \cite{Post} of a simple set\footnote{A simple set is an r.e.\ set whose complement is immune.} actually produces an effectively simple set. Sacks \cite{SacksSimpleSet} established the existence of a simple set that is not effectively simple. Subsequently, Martin \cite{Martin} showed that every effectively simple set is Turing complete, that is, it computes the halting problem, and thus cannot constitute a solution to Post's problem.

		A key result that establishes the significance of the notion of effective immunity outside the context of the co-r.e. sets is a theorem by Jockusch \cite{Jockusch} that says that the Turing degrees of the effectively immune sets coincide with those of the diagonally nonrecursive (or $\DNR$) functions. Recently, Jockusch and Lewis \cite{jockusch-lewis} have shown that every $\DNR$ function computes a \emph{bi-immune} set, i.e., one such that both it and its complement are immune. They left open the question of whether the result could be extended to show that every $\DNR$ function computes an \emph{effectively bi-immune set}.

		\begin{definition}
		 	A set $X$ is \emph{effectively bi-immune} (or $\EBI$) if $X$ and its complement, $\bar{X}$, are both effectively immune. If $f$ is a recursive function that witnesses the effective immunity of both $X$ and $\bar{X}$, we say it is \emph{effectively bi-immune via $f$}, or $f$-$\EBI$.
		\end{definition}

		The first author has provided a negative answer \cite{beros-dnc} to Jockusch and Lewis's question. To summarize: every $\DNR$ function computes an effectively immune set (in fact, of the same Turing degree), and a bi-immune set, but not every $\DNR$ function computes an $\EBI$ set. In Section \ref{sec:ebi-above-dnr-rec}, we provide a short proof of the main result from \cite{beros-dnc} that builds on previous work by Ambos-Spies, Kjos-Hanssen, Lempp, and Slaman \cite{DNRWWKL}.

		Every \ML\ random set is $\EBI$. How close are these properties? Greenberg and Miller \cite{MillerGreenberg} have shown that there are sets of effective Hausdorff dimension 1 that compute no \ML\ random set. The main result of Section \ref{sec:slow-dnr} shows that there are sets of the former type that compute no $\EBI$ set, and is a possible strengthening of the Greenberg-Miller result.

		It is not known whether every $\EBI$ set computes a \ML\ random set. However, existing results imply that the Turing degrees of these two classes do not coincide. Barmpalias, Lewis, and Ng \cite{BarmpaliasLewisNg} have shown that every PA degree is the join of two \ML\ random degrees. The join of two $\EBI$ sets is easily seen to be $\EBI$, and so $\EBI$ sets are present in every PA degree, in particular, the incomplete ones. Such a degree cannot contain a \ML\ random set, by a theorem of Stephan \cite{Stephan}.

		\section{Computing recursively bounded DNR functions}\label{sec:ebi-above-dnr-rec}

		The following has been obtained independently by Sanjay Jain and Ludovic Patey:

			\begin{theorem}\label{thm:ebi-rec-dnr}
				Every $\EBI$ set computes a recursively bounded $\DNR$ function. Moreover, a recursive bound for the $\DNR$ function can be obtained uniformly from a witnessing function for the $\EBI$ set.
			\end{theorem}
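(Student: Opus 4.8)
The plan is to produce the desired $\DNR$ function $g \le_T X$ by reading off $X$ in consecutive blocks $I_0 < I_1 < \cdots$, letting $g(n)$ be a code for the binary string $X \res I_n$, and choosing the length of $I_n$ large enough that effective bi-immunity rules out the only configuration that would make $g(n) = \tur_n(n)$. Since $g(n)$ will be a code for a string of recursively-given length $\ell_n$, under a fixed injective coding of binary strings by numbers in which strings of length $\ell$ receive codes below $2^{\ell+1}$, the function $g$ is automatically bounded by the recursive function $n \mapsto 2^{\ell_n+1}$; so all the content is in arranging the block lengths and verifying the $\DNR$ condition.

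Fix a recursive $f$ witnessing that $X$ is $\EBI$. First I would use the $s$-$m$-$n$ theorem to fix total recursive functions $s(n,c)$ and $t(n,c)$ such that $W_{s(n,c)}$ is the r.e.\ set obtained by running $\tur_n(n)$ and, if it halts with output a code for a finite binary string $\sigma$, enumerating $\{c+j : j<|\sigma|, \sigma(j)=1\}$, while $W_{t(n,c)}$ enumerates the analogous set using the positions where $\sigma$ is $0$; crucially, $s$ and $t$ depend only on the block's left endpoint $c$, not on its length. Put $G(n,c) = f(s(n,c)) + f(t(n,c))$, which is recursive and uniformly computable from an index for $f$. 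Now define the blocks by the recursion $c_0 = 0$, $\ell_n = G(n,c_n)+1$, $c_{n+1} = c_n + \ell_n$, $I_n = [c_n, c_{n+1})$; this is a legitimate recursive definition precisely because $G(n,c_n)$ does not itself depend on $\ell_n$. Finally set $g(n)$ to be the code of $X \res I_n$. The bound $n \mapsto 2^{\ell_n+1}$ is then obtained uniformly from $f$, as the moreover-clause demands.

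To see $g$ is $\DNR$, suppose $\tur_n(n)\converges$ and, toward a contradiction, $g(n) = \tur_n(n)$. Since $g(n)$ codes the length-$\ell_n$ string $X \res I_n$ and the coding is injective, the number $\tur_n(n)$ codes that same string; write $\sigma = X \res I_n$, so $|\sigma| = \ell_n$. Then $W_{s(n,c_n)} = \{c_n+j : j<\ell_n, \sigma(j)=1\} \subseteq X$ and $W_{t(n,c_n)} = \{c_n+j : j<\ell_n, \sigma(j)=0\} \subseteq \bar X$, so the $\EBI$-ness of $X$ via $f$ gives $|W_{s(n,c_n)}| \le f(s(n,c_n))$ and $|W_{t(n,c_n)}| \le f(t(n,c_n))$; adding these, and using that each $j<\ell_n$ lies in exactly one of the two sets, yields $\ell_n \le G(n,c_n) = \ell_n - 1$, a contradiction. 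Hence $g(n) \ne \tur_n(n)$ whenever $\tur_n(n)\converges$, so $g$ is a recursively bounded $\DNR$ function computable from $X$. The one genuine obstacle is the apparent circularity in choosing $\ell_n$: the block must be longer than $f$ evaluated at indices of r.e.\ sets that seemingly refer to the very positions the block occupies. Keeping the indices $s(n,c_n)$ and $t(n,c_n)$ independent of $\ell_n$ is exactly what breaks that circularity; everything else is bookkeeping.
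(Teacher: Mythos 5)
Your proof is correct, and it takes a genuinely different route from the paper's. Both arguments code a chunk of $X$ as the value of the $\DNR$ function, with the chunk's size calibrated (via the $s$-$m$-$n$/recursion theorem) so that if $\tur_n(n)$ predicted it, the resulting r.e.\ sets would violate effective immunity via $f$; the difference is in \emph{what} gets coded and how the recursive bound falls out. The paper codes the first $f(h(n))+1$ elements of $X$ under a bijection onto finite sets. Since the first $k$ elements of $X$ could be arbitrarily large, it must build a second $\DNR$ function $\bar g$ from $\bar X$ and take $\tilde g = \min(g,\bar g)$, relying on the pigeonhole fact that the first $k$ elements of $X$ or of $\bar X$ all lie below $2k$. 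You instead code a contiguous block $X\res[c_n,c_{n+1})$ whose length $\ell_n$ is fixed in advance by a recursion, so the code is bounded by $2^{\ell_n+1}$ outright, with no $\min$ trick. Bi-immunity is then exploited in a single diagonalization: both $|W_{s(n,c_n)}|\le f(s(n,c_n))$ and $|W_{t(n,c_n)}|\le f(t(n,c_n))$ are used simultaneously, and since those two sets partition the block, their sizes sum to $\ell_n > G(n,c_n)$, a contradiction. Your observation that $s(n,c)$ and $t(n,c)$ can be defined independently of the block length, so that $\ell_n = G(n,c_n)+1$ is a legitimate recursion, is exactly the point that makes this work. The trade-off: you pay with the recursive definition of the blocks, but get a cleaner, self-evidently recursive bound and avoid splitting into two $\DNR$ functions; the paper avoids the block recursion but needs the $\min$ maneuver and the accompanying estimate on where the first $k$ elements must sit.
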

			\begin{proof}
				Let $\gamma$ be any recursive bijection from $\omega$ to the collection of finite subsets of $\omega$, and for an infinite set $Y \subseteq \omega$,
				let $Y_n$ denote the set consisting of the first $n$ elements of $Y$.

				Suppose $X$ is effectively bi-immune via $f$. Let $h$ be a recursive function such that for all $n$,
				\[W_{h(n)} = \begin{cases}
					\gamma(\varphi_n(n)), & \text{ if $\varphi_n(n) \converges$}\\
					\emptyset, & \text{ otherwise.}
				\end{cases}\]

				Now let $g(n) = \gamma^{-1}(X_{f(h(n)) + 1})$ and let $\bar{g}(n) = \gamma^{-1}(\bar{X}_{f(h(n)) + 1})$.
				We claim that both $g$ and $\bar{g}$ are $\DNR$. Suppose that for some $e$, $\varphi_e(e) = g(e)$.
				Then $X_{f(h(e)) + 1} = W_{h(e)}$. But then $W_{h(e)} \subset X$ and $|W_{h(e)}| > f(h(e))$, a contradiction. The argument for $\bar{g}$ is identical.

				Finally, let $\tilde{g} = \min(g, \bar{g})$. Clearly, $\tilde{g}$ is $\DNR$ and recursive in $X$.
				Given any $n \in \omega$, the largest elements in $X_{f(h(n)) + 1}$ and $\bar{X}_{f(h(n)) + 1}$ cannot both be larger than $2f(h(n)) + 1$.  Thus, letting
				\[
					\pi(n) = \max\dset{\gamma^{-1}(D)}{\max(D) \le 2f(h(n)) + 1},
				\]
				we have $\tilde{g}(n) \le \pi(n)$.
			\end{proof}

			Ambos-Spies et al. \cite{DNRWWKL} have shown that there is a $\DNR$ function that computes no recursively bounded $\DNR$ function, and so we reprove the main result of \cite{beros-dnc}:

			\begin{corollary}
				There is a $\DNR$ function that computes no $\EBI$ set.
			\end{corollary}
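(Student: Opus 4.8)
The plan is to derive the corollary at once by combining Theorem~\ref{thm:ebi-rec-dnr} with the separation theorem of Ambos-Spies, Kjos-Hanssen, Lempp, and Slaman \cite{DNRWWKL}. First I would invoke \cite{DNRWWKL} to fix a $\DNR$ function $D$ that computes no recursively bounded $\DNR$ function. The claim is that this same $D$ witnesses the corollary.

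Suppose, toward a contradiction, that $D$ computes an $\EBI$ set $X$, say effectively bi-immune via a recursive function $f$. By Theorem~\ref{thm:ebi-rec-dnr}, $X$ computes a recursively bounded $\DNR$ function — concretely the function $\tilde{g}$ built in that proof, with recursive bound $\pi$ obtained uniformly from $f$. Since Turing reducibility is transitive, $D \geq_T X \geq_T \tilde{g}$, so $D$ computes the recursively bounded $\DNR$ function $\tilde{g}$, contradicting the choice of $D$. Hence no $\EBI$ set is computable from $D$.

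There is essentially no obstacle here: the mathematical content has been front-loaded into Theorem~\ref{thm:ebi-rec-dnr} (which furnishes not merely a $\DNR$ function but one with an explicit recursive bound) and into the cited separation theorem. The one point worth stating carefully is that ``recursively bounded'' means the existence of \emph{some} recursive bound — which is exactly what the result of \cite{DNRWWKL} denies for $D$ and exactly what Theorem~\ref{thm:ebi-rec-dnr} produces from an $\EBI$ set — so the two statements dovetail with nothing lost in between.
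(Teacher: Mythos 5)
Your argument is exactly the paper's: fix the $\DNR$ function from \cite{DNRWWKL} that computes no recursively bounded $\DNR$ function, and note via Theorem~\ref{thm:ebi-rec-dnr} and transitivity of $\le_T$ that it therefore computes no $\EBI$ set. Correct, and the same approach as the paper.
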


			It is worth noting that the construction in \cite{beros-dnc} achieves significantly more than was claimed in that paper. It partially relativizes, and Turing reduction can be replaced with recursive enumeration:
			\begin{theorem}[Beros]
				For any set $A$, there is a function $f$ that is $\DNC$ relative to $A$, such that no $\EBI$ set is r.e.\ in $f$.
			\end{theorem}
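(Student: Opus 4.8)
The plan is to revisit the forcing construction of \cite{beros-dnc} and check that it adapts to the present statement with two changes: the diagonalization is against $\Phi_i^A$ in place of $\varphi_i$, and the sets to be ``killed'' are the $f$-recursively-enumerable sets $W_e^f$ rather than the $f$-recursive ones. One builds $f=\bigcup_s\sigma_s$ by finite extensions. There are two kinds of requirements. For each $i$, the requirement $P_i$ asks that $f(i)\ne\Phi_i^A(i)$ whenever the latter converges; since at most one value is ever forbidden at position $i$, these are met routinely. For each index $e$ and each total recursive $q$, the requirement $N_{e,q}$ asks that $W_e^f$ not be effectively immune via $q$. Since every $\EBI$ set is in particular effectively immune, and is effectively immune via some total recursive $q$, meeting all the $N_{e,q}$ forces every $W_e^f$ to fail to be $\EBI$, hence that no $\EBI$ set is r.e.\ in $f$.

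Fix $e$ and $q$. The strategy for $N_{e,q}$ uses the recursion theorem to fix, in advance, an index $j$ for a recursively enumerable set $W_j$ that it will build, together with the value $m:=q(j)$ (when $\varphi_q$ is total). Working above whatever condition $\sigma$ is current when it becomes active, the strategy waits for the current condition to have at least $m+1$ numbers enumerated into $W_e^{(\cdot)}$; when that happens it enumerates $m+1$ of those elements into $W_j$ (the condition itself need not change). Since the current condition is an initial segment of $f$, those elements then lie in $W_e^f$, so $W_j\subseteq W_e^f$ with $|W_j|=m+1>q(j)$, and $W_e^f$ is not effectively immune via $q$. If the wait never ends, then $|W_e^{f\upharpoonright n}|\le m$ for every $n$ (each $f\upharpoonright n$ extends $\sigma$ and, being an initial segment of a $\DNR$-relative-to-$A$ function, is itself a legitimate condition), so $W_e^f$ is finite, hence not immune; by monotonicity of the enumeration operator this is exactly the case where $W_e^f$ is finite. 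Either way $N_{e,q}$ is met. Distinct strategies use distinct indices $j$ and act at most once, so no clash arises among them, and the $P_i$ are interleaved in the usual way.

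The delicate point — and, I expect, the main obstacle — is that each $W_j$ must be \emph{genuinely} recursively enumerable, while the construction must consult (say) $A'$ to diagonalize against $\Phi_i^A$; a naive implementation would make the $\sigma_s$, and hence the $W_j$ read off from them, only $A'$-r.e., which is worthless against effective immunity. The construction of \cite{beros-dnc} must therefore be organized so that the layer deciding which numbers enter the various $W_j$ is recursive, with the appeals to halting information about $\Phi_i^A$ quarantined to the choice of function values themselves (via a finite-injury or similar discipline that protects, for each active $N_{e,q}$, the part of the condition its enumeration relies on). Granting this, the two stated modifications cost essentially nothing: relativizing to $A$ touches only the $P_i$, where there is still exactly one forbidden value per coordinate; and replacing Turing reductions by enumeration operators is if anything a simplification, since the $N_{e,q}$-strategy only ever needs to \emph{recognize} elements of $W_e^f$ and never to settle membership in its complement — which is precisely what ``recursive enumeration'' provides, and is the reason Turing reducibility can be weakened to it.
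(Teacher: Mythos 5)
The paper offers no proof of this statement; it merely observes, in the sentence preceding the theorem header, that the construction in \cite{beros-dnc} ``achieves significantly more than was claimed,'' so there is no in-house argument to compare yours against. Your sketch is therefore the right sort of thing to attempt, and you correctly identify the genuine obstacle: if the construction of $f$ consults $A'$ to decide whether $\Phi_i^A(i)$ converges, a naive finite-extension argument only yields $A'$-r.e.\ auxiliary sets $W_j$, which are useless for defeating effective immunity. You also correctly note that passing from Turing reducibility to recursive enumerability is, for the negative requirements, harmless, since the strategy only needs to recognize elements of $W_e^f$, never to certify non-membership.

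But you do not close the gap you flag. Appealing to ``a finite-injury or similar discipline that protects the part of the condition the enumeration relies on'' misdiagnoses the problem: the difficulty is not that the condition might later be injured, but that the condition is itself an object computed from $A'$, so any $W_j$ that simply reads it off is $A'$-r.e.\ from the outset; protecting it does not make it recursive. What is actually needed is a design in which the enumeration of each $W_j$ is driven by a purely recursive search (over a recursive tree of candidate extensions, say), together with a combinatorial reason --- a pigeonhole, majority, or bushy-tree/fusion argument of the kind deployed in Section~\ref{sec:slow-dnr} --- guaranteeing that what gets enumerated lands inside $W_e^f$ no matter which branch the $A'$-guided construction ultimately steers $f$ through. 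Your outline contains no such mechanism. Likewise, your ``wait-never-ends'' case tacitly assumes the strategy can monitor $W_e^{f\upharpoonright n}$ for the true $f$, which again presupposes access to the $A'$-recursive construction. The proposal therefore has a genuine gap exactly at the point you yourself identify as the delicate one, and no amount of interleaving or priority bookkeeping, as currently described, repairs it.
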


	\section{Slow-growing DNR functions}\label{sec:slow-dnr}

	In the language of mass problems, Theorem~\ref{thm:ebi-rec-dnr} says that the problem of computing a recursively bounded $\DNR$ is weakly (or Muchnik) reducible to that of computing an $\EBI$ set. One might wonder if the reverse is true, that is, if the two mass problems can be shown to be equivalent. Failing that, one might hope to show that sufficiently slow-growing $\DNR$ functions suffice. More precisely, perhaps there is a slow enough recursive bound $g$ such that all $g$-bounded $\DNR$ functions compute $\EBI$ sets. Khan and Miller have shown \cite{KhanMiller} that by varying $g$, one can obtain a proper hierarchy of mass problems of recursively bounded $\DNR$ functions. Our main result in this section settles these questions.
	\begin{definition}
		An \emph{order function} is a recursive, unbounded and nondecreasing function from $\omega$ to $\omega \setminus \{0, 1\}$.
	\end{definition}

			\begin{theorem} \label{thm:slow-no-ebi}
			For each order function $g$, for each oracle $X$, there is a $g$-bounded function $f$ that is $\DNR$ relative to $X$ and that computes no $\EBI$ set.
			\end{theorem}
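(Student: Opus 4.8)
The plan is to adapt the bushy-tree forcing argument by which Greenberg and Miller \cite{MillerGreenberg} produce slow-growing $\DNR^X$ functions computing no \ML\ random set, replacing their randomness requirements with requirements that defeat effective bi-immunity. Fix the order function $g$ and the oracle $X$, and let $\mathcal T_g=\{\sigma\in\bstrings:\forall i<|\sigma|\ \sigma(i)\le g(i)\}$ be the full $g$-bounded tree. We construct $f$ as the union $\bigcup_s\sigma_s$ of the stems of a decreasing sequence of conditions, a condition being (roughly) a pair consisting of a $g$-bounded, $\DNR^X$-correct string $\sigma$ --- meaning $\sigma(i)\neq\Phi^X_i(i)$ for every $i<|\sigma|$ with $\Phi^X_i(i)\converges$ --- together with an $X$-computable ``bad set'' $B\subseteq\bstrings$ that is small in the bushy-tree sense relative to a bushiness guarantee attached to the condition, and which every further extension of the stem must avoid. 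Because $g$ is an order function it is unbounded, so $\mathcal T_g$, although possibly very thin near the root, is arbitrarily bushy higher up; the bushiness guarantee of a condition is therefore allowed to depend on the length of the stem and to grow as the stem is lengthened. With this framework the constraint $f\le g$ and the requirement that $f$ be a total $\DNR^X$ function are handled exactly as in the Greenberg--Miller construction: the strings that fail $\DNR^X$-correctness form a set small enough to be absorbed into $B$, and one interleaves ``lengthen the stem by one level, $\DNR^X$-correctly and avoiding $B$'' steps --- always possible, since at least one admissible value survives at each level --- so that $f$ is total.

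The new ingredient is the family of requirements
\[
R_{i,j}:\quad \Psi_i^{f}\text{ is not }\EBI\text{ via }\varphi_j .
\]
Since effective bi-immunity requires a \emph{total} recursive witness and there are only countably many candidates $\varphi_j$, meeting every $R_{i,j}$ forces $f$ to compute no $\EBI$ set. To meet $R_{i,j}$ we apply the recursion theorem to fix in advance two distinct indices $a^{0},a^{1}$ of recursively enumerable sets that the construction controls. Working from the current condition, we first wait for $\varphi_j(a^{0})$ and $\varphi_j(a^{1})$ to converge; if one never does, then $\varphi_j$ is not total, $R_{i,j}$ holds vacuously, and the requirement idles at no cost. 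Otherwise set $k=\max(\varphi_j(a^{0}),\varphi_j(a^{1}))+1$. We may also assume $\Psi_i^{f}$ is forced to be a total $\{0,1\}$-valued function, since a failure of this is itself a way of meeting $R_{i,j}$, handled by the trichotomy below. We then repeat the following until one of $W_{a^{0}},W_{a^{1}}$ has $k$ elements: pick a fresh number $x$, larger than everything used so far and than the current stem length, and consult, via the smallness calculus, the three cases.

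\textbf{(a)} If the set of $\tau\succeq\sigma$ in $\mathcal T_g$ avoiding $B$ with $\Psi_i^{\tau}(x)\converges=1$ is big relative to the current guarantee, extend the condition so as to force $\Psi_i^{f}(x)=1$ and enumerate $x$ into $W_{a^{1}}$. \textbf{(b)} Symmetrically, if the analogous set for the value $0$ is big, force $\Psi_i^{f}(x)=0$ and enumerate $x$ into $W_{a^{0}}$. \textbf{(c)} If neither is big, then by subadditivity of smallness the set of $\tau$ with $\Psi_i^{\tau}(x)\in\{0,1\}$ is small; add it to $B$, forcing $\Psi_i^{f}(x)\notin\{0,1\}$, so that $\Psi_i^{f}$ is not a set and $R_{i,j}$ is met permanently. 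If (c) never occurs, each of the infinitely many chosen $x$ falls under (a) or (b), so eventually some $W_{a^{b}}$ reaches $k$ elements, all of which were forced to lie in (resp.\ out of) $\Psi_i^{f}$; then $W_{a^{b}}$ witnesses that $\varphi_j$ fails to bound the effective immunity of $\Psi_i^{f}$ (resp.\ of its complement), so $R_{i,j}$ is met. In every case the action of $R_{i,j}$ is finite --- at most $k$ extensions of the condition and at most one enlargement of $B$ --- so the fusion proceeds to the next requirement.

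The main obstacle is precisely the bushiness bookkeeping imposed by the slow growth of $g$. Each of the finitely many steps above --- deciding $\Psi_i^{\tau}(x)$ in the chosen way, or enlarging $B$ by a small set --- decreases the current bushiness guarantee by a bounded amount, and this must happen infinitely often, so no fixed finite bushiness budget can work. The resolution, which is the heart of the Greenberg--Miller technique and which we must carry over, is that before a requirement acts one first lengthens the stem --- $\DNR^X$-correctly and avoiding $B$, which costs essentially nothing because a single admissible extension survives at each level --- into a region where $g$, and hence the bushiness available in $\mathcal T_g$, is large enough to pay for that requirement's action and still leave a working guarantee. Checking that this replenishment can always be arranged while keeping $B$ small, the stem $\DNR^X$-correct, and the generic inside $\mathcal T_g$, and checking the underlying smallness lemmas (subadditivity; that a big set of $\Psi$-deciding extensions yields, after extending the stem, a still-big guarantee) is the technical core; granting these, the construction yields a total $g$-bounded $\DNR^X$ function $f$ with $\Psi_i^{f}$ not $\EBI$ for every $i$, which is the theorem.
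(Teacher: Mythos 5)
Your framework matches the paper's: bushy-tree forcing on $g^{<\omega}$, requirements $R_{i,j}$ asserting that $\Gamma^f$ is not $\EBI$ via a given recursive function, and the recursion theorem to pre-select indices of r.e.\ sets whose size will defeat the candidate witness. You also correctly identify the bushiness-replenishment maneuver (lengthen the stem so that $g(|\sigma|)$ is large before acting) that pays for each requirement's costs. However, there is a genuine gap at the heart of the argument that your sketch does not address, and it is precisely the difficulty that the paper's ``admits fusion'' dichotomy is designed to overcome.

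The problem is that the sets $W_{a^0}$ and $W_{a^1}$ as you describe them are not recursively enumerable. You enumerate $x$ into $W_{a^1}$ when the set $\{\tau\succeq\sigma : \tau\notin B,\ \Psi_i^\tau(x)\converges=1\}$ is big relative to the current condition, and then you extend the stem and repeat on the new stem. But the stem at each step, the current bad set $B$ (which contains $B_{\DNR^X}$, a $\Pi^0_1(X)$ set), and the check ``is this set big above the current stem while avoiding $B$'' all depend on the oracle $X$ and on non-r.e.\ data; in particular the discrimination between cases (a), (b), (c) is not even $\Sigma^0_1$. So the ``enumeration'' of $W_{a^0},W_{a^1}$ that your recursion-theorem fixed points are supposed to carry out is not a recursive procedure, and the resulting sets cannot serve as r.e.\ witnesses against effective bi-immunity of $\Gamma^f$. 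Simply put, effective bi-immunity is a statement about genuine r.e.\ sets, and your construction produces sets that are only r.e.\ in $X$ together with the forcing data.

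The paper resolves this by never letting the r.e.\ sets' enumeration see $B$ or $X$. It first shows (after a totality step) that either some $\tau$ \emph{admits fusion}---meaning there is a $2k$-bushy subtree $T'$ above $\tau$ on which $\Gamma^f$ is constant on infinitely many inputs---or else no extension does, in which case one can build a $k$-bushy subtree on which $\Gamma^f$ is eventually $0$. In either case, the finite bushy trees $R_k$ approximating the relevant subtree can be found by \emph{exhaustive recursive search over finite objects}, with no reference to $B$ or $X$: the fusion/no-fusion analysis guarantees such trees exist, and one simply searches for any finite bushy tree above $\tau$ that decides enough values of $\Gamma$. The r.e.\ sets $W_{e_0},W_{e_1}$ are then defined from the found tree $R_{2m+1}$ alone, and smallness of $B$ is used only afterward, to select a leaf $\rho$ of $R_{2m+1}$ outside $B$ so that $(\rho,B)$ is a condition forcing $W_{e_i}$ to lie in $\Gamma^f$ or its complement. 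Your one-bit-at-a-time trichotomy skips this entire mechanism: you never explain why the choices (a)/(b)/(c) could be made coherently across the $x$'s by a recursive procedure, nor why a single bushy tree exists that forces all the chosen values simultaneously. That coherence is exactly what the fusion construction delivers, and without it the proposal does not go through.
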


			In other words, there are arbitrarily slow-growing $\DNR$ functions relative to any oracle that compute no effectively bi-immune set. On the other hand, sufficiently slow-growing $\DNR$ functions are known to compute sets of effective Hausdorff dimension 1:

			\begin{theorem}[Greenberg and Miller \cite{MillerGreenberg}]
				There is an order function $h$ such that every $h$-bounded $\DNR$ function computes a set of effective Hausdorff dimension 1.
			\end{theorem}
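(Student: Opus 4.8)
The goal is to produce, uniformly from any $h$-bounded $\DNR$ function $f$ (for a single well-chosen order function $h$), a set $Y$ computable from $f$ with $\liminf_n K(Y \upharpoonright n)/n = 1$; by the Kolmogorov-complexity characterization of effective Hausdorff dimension this says exactly that $Y$ has effective Hausdorff dimension $1$. I would build $Y$ blockwise. Fix a recursive partition of $\omega$ into consecutive intervals, the $j$-th of length $\ell_j$, chosen so that $\ell_j \to \infty$ but $\ell_j = o\!\left(\sum_{i<j}\ell_i\right)$ — so that $n_j := \sum_{i<j}\ell_i$ satisfies $n_{j+1}/n_j \to 1$; e.g.\ $\ell_j = j$. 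Fix also a slowly growing recursive deficiency $c_j$, say $c_j = \lceil\log(\ell_j+1)\rceil$, and let $h$ take a large, nondecreasing constant value on the $j$-th interval, to be specified below. The plan is to use $f$ to select, for each $j$, a block $\sigma_j \in 2^{\ell_j}$ that is incompressible given the earlier blocks, i.e.\ $K\!\left(\sigma_j \mid \sigma_0{}^\frown\cdots{}^\frown\sigma_{j-1}\right) \ge \ell_j - c_j$, and then set $Y = \sigma_0{}^\frown\sigma_1{}^\frown\cdots$. Standard chain-rule estimates for Kolmogorov complexity then give, by induction on $j$, $K(Y\upharpoonright n_{j+1}) \ge n_{j+1} - \sum_{i\le j}\bigl(c_i + O(\log n_{j+1})\bigr) = n_{j+1} - o(n_{j+1})$; and for $n \in [n_j,n_{j+1})$, since $Y\upharpoonright n_j$ is recovered from $Y\upharpoonright n$ and $n_j$, $K(Y\upharpoonright n) \ge K(Y\upharpoonright n_j) - O(\log n) \ge n_j - o(n_j)$, whence $K(Y\upharpoonright n)/n \ge (1-o(1))\,n_j/n_{j+1} \to 1$, so $\dim(Y) = 1$.

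The heart of the matter is the \emph{block extraction}: making an $h$-bounded $\DNR$ function $f$ compute, uniformly in $j$, blocks $\sigma_j$ with the stated conditional incompressibility. This is where the growth rate of $h$ gets pinned down and it is the step I expect to be the main obstacle. The naive approach — using a single $\DNR$ value of $f$ as an index into $2^{\ell_j}$ to dodge the uniformly r.e.\ set $B_j = \{\tau \in 2^{\ell_j} : K(\tau\mid\sigma_{<j}) < \ell_j - c_j\}$, which has fewer than $2^{\ell_j - c_j}$ members — does not work, because being $\DNR$ does not by itself allow one to avoid even a two-element r.e.\ set at a prescribed location (an adversarial $\DNR$ function can ``rotate'' within such a set). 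The correct tool is the ``bushy tree'' machinery of Kumabe--Lewis, of Ambos-Spies et al.\ \cite{DNRWWKL}, and of Khan--Miller \cite{KhanMiller}, which also underlies the Kjos-Hanssen--Merkle--Stephan characterization of $\DNR$ degrees as those computing functions of non-trivial Kolmogorov complexity: for a sufficiently fast-growing order function $h$, every $h$-bounded $\DNR$ function computes a path avoiding every uniformly r.e.\ set of strings that is ``small'' in the appropriate combinatorial (bushiness) sense, and, once the value of $h$ on block $j$ is taken large compared to $2^{\ell_j - c_j}$, the block encoding of binary strings as $h$-bounded strings carries the compressible-string sets $B_j$ back to small sets in exactly that sense.

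Concretely the work breaks into four parts: (i) set up the bushy-tree apparatus, in particular the lemma that an $h$-bounded $\DNR$ function computes a function with no initial segment in any $q$-small uniformly r.e.\ set of strings, where the order $q$ may be made as fast as desired provided $h$ is chosen fast enough relative to $q$; (ii) verify that the block encoding (with $h$ on block $j$ exceeding, say, $2^{\ell_j}$) pulls the sets $\{x : K(x\mid\text{prefix}) < |x| - c\}$ back to $q$-small sets, so that the $\sigma_j$ delivered by (i) satisfy $K(\sigma_j\mid\sigma_{<j}) \ge \ell_j - c_j$; (iii) assemble $h$ from the block values and check it is a genuine order function (recursive, nondecreasing, unbounded, values $\ge 2$); (iv) run the accounting of the first paragraph. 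The main obstacle is step (ii) together with the calibration in (i): one must determine precisely how fast $h$ must grow for the compressible-string sets to become combinatorially small, and the bookkeeping is delicate because a single $Y$ must satisfy $\liminf_n K(Y\upharpoonright n)/n = 1$ — not merely $\ge 1-\varepsilon$ for each fixed $\varepsilon$ — which forces the deficiencies $c_j$, and hence the rate at which $h$ increases, to be tuned carefully along the blocks. That $h$ genuinely has to grow quickly cannot be circumvented: by Theorem~\ref{thm:slow-no-ebi} together with the Khan--Miller hierarchy \cite{KhanMiller} of mass problems for recursively bounded $\DNR$ functions, no order function $h$ that grows too slowly can have this property, so the theorem is really a statement about $\DNR$ functions with a rapidly growing bound.
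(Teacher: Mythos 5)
First, note that the paper does not prove this statement: it is quoted from Greenberg and Miller \cite{MillerGreenberg}, so your proposal has to be measured against their proof rather than anything in this paper. Your outer skeleton (build $Y$ blockwise from blocks that are incompressible given their predecessors, then run the standard complexity accounting with $\ell_j = o(n_j)$) is fine and is broadly how such results are organized. But the entire mathematical content lies in your step (ii), the extraction of conditionally incompressible blocks from an arbitrary $h$-bounded $\DNR$ function, and there you lean on a lemma that does not exist in the form you state: ``for a sufficiently fast-growing order function $h$, every $h$-bounded $\DNR$ function computes a path avoiding every uniformly r.e.\ set of strings that is small in the bushiness sense.'' The bushy-tree smallness machinery (the lemmas quoted in Section 3 of this paper, and \cite{KhanMiller}) is a forcing tool: it is used to build particular $\DNR$ functions with prescribed weaknesses, as in Theorem~\ref{thm:slow-no-ebi}, not to extract computational power from an arbitrary given $h$-bounded $\DNR$ function $f$, whose only usable property is $f(e)\neq\varphi_e(e)$. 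Indeed, with your calibration ($h\ge 2^{\ell_j}$ on block $j$) a single diagonal value of $f$ rules out only one of the roughly $2^{\ell_j-c_j}$ compressible candidates, and you supply no scheme for combining values so as to dodge all of them; the ``rotation'' obstruction you correctly identify for the naive approach simply reappears one level up. Pinning down how an $h$-bounded $\DNR$ function can defeat an r.e.\ family of that size is precisely the technical heart of the Greenberg--Miller argument, and it is absent from the proposal.

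Second, your calibration points in the wrong direction, and your closing claim is false. If $h'\le h$ pointwise, then every $h'$-bounded $\DNR$ function is also $h$-bounded, so once the conclusion holds for some order function $h$ it automatically holds for every slower one; slow bounds can never be ``ruled out,'' and the theorem as used here is specifically about sufficiently slow-growing $h$ (the paper says so just before stating it, and that reading is what combines with Theorem~\ref{thm:slow-no-ebi} to give the corollary about a dimension-$1$ real computing no $\EBI$ set). The appeal to Theorem~\ref{thm:slow-no-ebi} together with the Khan--Miller hierarchy is a non sequitur: that theorem concerns computing $\EBI$ sets, not sets of effective Hausdorff dimension $1$, and the corollary says exactly that these two properties come apart. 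Moreover, proving the statement for a fast-growing $h$, as you propose, would be a strictly stronger assertion (it quantifies over a larger class of $\DNR$ functions) than what Greenberg and Miller prove, and the general phenomenon runs the other way: recursively bounded $\DNR$ functions become weaker as the bound grows (at the extreme, constant bounds give exactly the PA degrees, which compute Martin-L\"of randoms and hence dimension-$1$ reals), which is why the extraction argument needs $h$ slow, not fast. As written, the proposal supplies the routine accounting but mislocates and omits the hard part, so it does not constitute a proof of the theorem.
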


			Together, these theorems imply the following:

			\begin{corollary}
				There is a real of effective Hausdorff dimension 1 that computes no $\EBI$ set.
			\end{corollary}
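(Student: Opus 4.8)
The plan is simply to combine the two preceding theorems, since the corollary is a formal consequence of them. First I would fix the order function $h$ provided by the Greenberg--Miller theorem, so that every $h$-bounded $\DNR$ function computes a set of effective Hausdorff dimension $1$. Then I would invoke Theorem~\ref{thm:slow-no-ebi} with $g = h$ and with $X = \emptyset$ (any oracle works, so the least one suffices), obtaining an $h$-bounded function $f$ that is $\DNR$ relative to $\emptyset$, hence $\DNR$ outright, and that computes no $\EBI$ set.

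Next, applying the Greenberg--Miller theorem to this particular $f$, I obtain a set $Y \le_T f$ of effective Hausdorff dimension $1$. The claim is that $Y$ is the desired witness. Its effective dimension is $1$ by construction, so it remains only to observe that $Y$ computes no $\EBI$ set: if $Z \le_T Y$, then $Z \le_T f$ by transitivity of Turing reducibility, and since $f$ computes no $\EBI$ set, $Z$ is not $\EBI$. Hence no set Turing below $Y$ is $\EBI$, and $Y$ is a real of effective Hausdorff dimension $1$ computing no $\EBI$ set.

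There is essentially no obstacle here; the only point to be careful about is that ``computes no $\EBI$ set'' passes downward under Turing reducibility, which is immediate from transitivity of $\le_T$. The entire substance of the corollary lies in Theorem~\ref{thm:slow-no-ebi} (together with the cited Greenberg--Miller result), and once those are in hand the corollary is a one-line deduction.
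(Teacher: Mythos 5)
Your proposal matches the paper's intended argument exactly: fix the Greenberg--Miller order function $h$, apply Theorem~\ref{thm:slow-no-ebi} with $g=h$ and a trivial oracle to get an $h$-bounded $\DNR$ function $f$ computing no $\EBI$ set, then use Greenberg--Miller to extract $Y\le_T f$ of effective Hausdorff dimension $1$, which inherits ``computes no $\EBI$ set'' by transitivity of $\le_T$. This is precisely how the paper derives the corollary.
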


			In order to prove Theorem~\ref{thm:slow-no-ebi}, we force with bushy trees.

		\subsection{Definitions and combinatorial lemmas}\label{sec:defs-and-combinatorics}

			The following definitions can also be found in \cite{MillerGreenberg} and \cite{KhanMiller}.

			\begin{definition}
				Given $\sigma \in \bstrings$, we say that a tree $T \subseteq \bstrings$ is \emph{$n$-bushy above $\sigma$} if
				every element of $T$ is comparable with $\sigma$, and
				for every $\tau \in T$ that extends $\sigma$ and is not a leaf of $T$, $\tau$ has at least $n$ immediate extensions in $T$.
				We refer to $\sigma$ as the \emph{stem} of $T$.
			\end{definition}

			\begin{definition}
				Given $\sigma \in \bstrings$, we say that a set $B \subseteq \bstrings$ is \emph{$n$-big above $\sigma$} if
				there is a finite $n$-bushy tree $T$ above $\sigma$ such that all its leaves are in $B$.
				If $B$ is not $n$-big above $\sigma$ then we say that $B$ is \emph{$n$-small} above $\sigma$.
			\end{definition}

			Proofs of the following lemmas can be found in \cite{MillerGreenberg} and \cite{KhanMiller}.

			\begin{lemma}[Smallness preservation property]\label{lem:big-subset}
				Suppose that $B$ and $C$ are subsets of $\bstrings$ and that $\sigma \in \bstrings$.
				If $B$ and $C$ are respectively $n$- and $m$-small above $\sigma$, then $B \cup C$ is $(n+m-1)$-small above $\sigma$.
			\end{lemma}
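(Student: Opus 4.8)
I would assume that $B \cup C$ is $(n+m-1)$-big above $\sigma$ and derive that $B$ is $n$-big above $\sigma$ or $C$ is $m$-big above $\sigma$. So fix a finite $(n+m-1)$-bushy tree $T$ above $\sigma$ all of whose leaves lie in $B \cup C$. The idea is to prune $T$, working from the leaves toward the stem, to extract either an $n$-bushy witness for $B$ or an $m$-bushy witness for $C$. I would define, by induction on the (finite) height of nodes above the leaves, a labelling of each node $\tau \in T$ extending $\sigma$ by one of the symbols $B$ or $C$, together with a subtree rooted at $\tau$ witnessing the corresponding bigness.

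\emph{Base case:} each leaf of $T$ is in $B\cup C$, so assign it the label $B$ if it is in $B$, and otherwise the label $C$; the singleton tree $\{\tau\}$ is trivially $n$-bushy (and $m$-bushy) above $\tau$.

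\emph{Inductive step:} let $\tau$ be a non-leaf node of $T$ extending $\sigma$, with immediate extensions $\tau_1, \dots, \tau_k$ in $T$, where $k \ge n+m-1$. Each $\tau_i$ has already been labelled, say $b$ of them with $B$ and $c$ of them with $C$, so $b + c = k \ge n+m-1$; hence $b \ge n$ or $c \ge m$ (otherwise $b \le n-1$ and $c \le m-1$ force $b+c \le n+m-2$, a contradiction). In the first case, label $\tau$ with $B$ and attach, below $\tau$, the $n$ immediate extensions labelled $B$ together with their witnessing $n$-bushy subtrees; this gives an $n$-bushy tree above $\tau$ with all leaves in $B$. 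The second case is symmetric, producing an $m$-bushy tree above $\tau$ with all leaves in $C$. Applying this to $\tau = \sigma$ itself yields either an $n$-bushy tree above $\sigma$ with leaves in $B$ (so $B$ is $n$-big above $\sigma$) or an $m$-bushy tree above $\sigma$ with leaves in $C$ (so $C$ is $m$-big above $\sigma$), completing the contrapositive.

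\textbf{The main obstacle} is bookkeeping rather than conceptual: one must be careful that the pruned subtree attached below a node is genuinely a subtree of the ambient $\bstrings$ (all nodes comparable with the stem, immediate-extension structure respected) and that the induction is well-founded — since $T$ is finite, ranking nodes by their height above the leaves makes this immediate. A minor subtlety is the edge case where $\sigma$ is itself a leaf of $T$: then $T = \{\sigma\}$ with $\sigma \in B \cup C$, and $B$ is $n$-big (or $C$ is $m$-big) above $\sigma$ trivially. I would also note the degenerate possibility that $\sigma \notin T$ or that $T$ has no nodes properly extending $\sigma$; the definition of bushiness handles these uniformly, and the argument goes through unchanged.
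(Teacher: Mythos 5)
Your proof is correct and is essentially the standard argument for this lemma; the paper itself does not include a proof but defers to Greenberg--Miller and Khan--Miller, and your bottom-up pigeonhole labelling of a finite $(n+m-1)$-bushy witness tree is exactly the argument those sources give. The key step — that among $k \ge n+m-1$ immediate successors, either at least $n$ carry a $B$-label or at least $m$ carry a $C$-label, else $b+c \le n+m-2$ — is identified and used correctly, and the edge cases you flag ($\sigma$ a leaf, the witness subtree needing the stem attached) are genuine but harmless, as you say.
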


			\begin{lemma}[Small set closure property]\label{lem:small-set-closure}
				Suppose that $B \subset \bstrings$ is $n$-small above $\sigma$. Let $C = \dset{\tau \in \bstrings}{\text{$B$ is $n$-big above $\tau$}}$.
				Then $C$ is $n$-small above $\sigma$.
				Moreover $C$ is \emph{$n$-closed}, meaning that if $C$ is $n$-big above a string $\rho$, then $\rho \in C$.
			\end{lemma}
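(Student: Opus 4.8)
The plan is to prove the ``moreover'' clause first, since the $n$-smallness of $C$ above $\sigma$ then drops out as a one-line consequence. So I would begin by showing that $C$ is $n$-closed, via a grafting argument on bushy trees. Before that I would record the harmless normalization that any finite $n$-bushy tree above a string $\mu$ all of whose leaves lie in some set $D$ may be replaced by its restriction to the strings extending $\mu$, which is still finite, $n$-bushy above $\mu$, and has the same leaves; hence in testing $n$-bigness one may always take the witnessing tree to consist of its stem together with extensions of the stem.

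For $n$-closedness, suppose $C$ is $n$-big above $\rho$, witnessed by a finite $n$-bushy tree $T$ above $\rho$ (consisting of $\rho$ and its extensions) with every leaf in $C$. For each leaf $\ell$ of $T$, the fact that $\ell \in C$ means $B$ is $n$-big above $\ell$, so fix a finite $n$-bushy tree $T_\ell$ above $\ell$ (consisting of $\ell$ and its extensions) with every leaf in $B$. I would then form the union $S = T \cup \bigcup_\ell T_\ell$ over the finitely many leaves $\ell$ of $T$, and argue that $S$ is a finite $n$-bushy tree above $\rho$ with every leaf in $B$, which forces $\rho \in C$ and hence gives $n$-closedness.

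The verification of that last claim has three pieces. (i) $S$ is finite and all of its elements extend $\rho$. (ii) The leaves of the finite tree $T$ are pairwise incomparable, so for distinct leaves $\ell \neq \ell'$ nothing in $T_{\ell'}$ extends $\ell$; consequently an old leaf $\ell$ of $T$ remains a leaf of $S$ exactly when $T_\ell = \{\ell\}$, and in that case $\ell$ is a leaf of $T_\ell$, hence $\ell \in B$; every other leaf of $S$ properly extends some leaf $\ell$ of $T$, is therefore a leaf of $T_\ell$, and so again lies in $B$. (iii) For bushiness, a non-leaf $\tau \in S$ extending $\rho$ is either a non-leaf node of $T$ (so it already has $\ge n$ immediate extensions in $T \subseteq S$), or a leaf of $T$ that failed to remain a leaf of $S$ (so $T_\tau \neq \{\tau\}$ and $\tau$ has $\ge n$ immediate extensions in $T_\tau \subseteq S$), or a node of some $T_\ell$ strictly above $\ell$ that is not a leaf of $T_\ell$ (so it has $\ge n$ immediate extensions in $T_\ell \subseteq S$). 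Once $C$ is known to be $n$-closed, I would conclude: if $C$ were $n$-big above $\sigma$, then $n$-closedness would give $\sigma \in C$, i.e.\ $B$ would be $n$-big above $\sigma$, contradicting the hypothesis; hence $C$ is $n$-small above $\sigma$.

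The main obstacle is step (iii) of the grafting verification, and specifically the bookkeeping around whether a leaf $\ell$ of $T$ survives as a leaf of $S$ and the repeated use of pairwise incomparability of the leaves of $T$ to ensure the grafted subtrees $T_\ell$ neither overlap nor introduce unintended short branches. Everything else is routine; in particular, unlike a direct argument one might attempt, this route needs no appeal to Lemma~\ref{lem:big-subset}.
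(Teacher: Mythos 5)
Your proof is correct. The paper itself does not include a proof of this lemma, deferring instead to \cite{MillerGreenberg} and \cite{KhanMiller}; the grafting argument you give for $n$-closedness, with $n$-smallness of $C$ above $\sigma$ falling out as an immediate corollary, is precisely the standard argument in those references, and you are right that no appeal to Lemma~\ref{lem:big-subset} is needed.
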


		\subsection{Proof of Theorem~\ref{thm:slow-no-ebi}}
			For an order function $g$, let $g^{<\omega}$ denote the set of strings in $\bstrings$ whose entries are pointwise bounded by $g$. We define $g^\omega$ analogously.

			We work entirely in $g^{<\omega}$, forcing with conditions of the form $(\sigma, B)$, where $\sigma \in g^{<\omega}$ and $B \subset g^{<\omega}$ and
			$B$ is $g(|\sigma|)$-small above $\sigma$. A condition $(\sigma, B)$ \emph{extends} another condition $(\tau, C)$ if $\sigma\succeq\tau$ and $C \subseteq B$. Let $\pP$ denote this partial order of conditions. Let $[\sigma]$ denote the elements of $g^\omega$ that extend $\sigma$, and let $[B]^\prec$ denote the set of elements of $g^\omega$ that extend an element of $B$.

			For a functional $\Gamma$ and a recursive function $q$, let $\+H_{\Gamma, q}$ be the set of all conditions $(\sigma, B)$ such that
			if $f \in [\sigma] \setminus [B]^\prec$, then $\Gamma^f$ is not effectively bi-immune as witnessed by the function $q$.

			We assume that for all $f \in g^\omega$, for any functional $\Gamma$, the domain of $\Gamma^f$ is an initial segment of $\omega$.

			\begin{lemma}
				$\+H_{\Gamma, q}$ is dense in $\pP$.
			\end{lemma}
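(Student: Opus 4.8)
The plan is to show that any condition $(\sigma, B)$ has an extension lying in $\+H_{\Gamma, q}$. The target is to make $\Gamma^f$ fail effective bi-immunity via $q$, for every $f$ associated with the extension, in one of two ways: either $\Gamma^f$ is not total (hence the characteristic function of no set, so vacuously not $\EBI$ via $q$), or else some finite r.e.\ set $W_e$ with $|W_e| > q(e)$ is forced to lie wholly within $\Gamma^f$, or wholly within $\overline{\Gamma^f}$. The index $e$ must be obtained from the recursion theorem, since the size requirement on $W_e$ refers to its own index: I would apply it to get $e$ so that $W_e$ is enumerated by an oracle-free procedure that may use the value $e$. Put $m = q(e) + 1$. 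This procedure blindly searches for some $\rho \in g^{<\omega}$ extending $\sigma$ on which $\Gamma^\rho$ converges on the block $P$ of the first $2m - 1$ positions beyond $|\dom \Gamma^\sigma|$, and such that the set of extensions of $\sigma$ realizing $\Gamma^\rho$'s pattern on $P$ is big above $\sigma$. Upon finding such a $\rho$, it enumerates into $W_e$ the $m$ positions of $P$ that this pattern sends to a common value; these exist by the pigeonhole principle since $|P| = 2m-1$.

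For the forcing argument proper, I would run a bushy-tree dichotomy. Suppose first the procedure's search succeeds, yielding a pattern on $P$ whose realization set $Z$ is big above $\sigma$. Using the small set closure property (Lemma~\ref{lem:small-set-closure}) I would replace $B$ by its $g(|\sigma|)$-closure $\bar B$, which is still $g(|\sigma|)$-small and $g(|\sigma|)$-closed above $\sigma$, and then extract from a witnessing bushy tree for ``$Z$ is big above $\sigma$'' a leaf $\tau$ with $\tau \notin \bar B$ --- possible, for otherwise $\bar B$ would itself be big above $\sigma$. Closedness makes $\bar B$ small above $\tau$, hence $g(|\tau|)$-small above $\tau$, so $(\tau, \bar B)$ is a condition extending $(\sigma, B)$; and since $\Gamma^\tau$ already realizes the pattern on $P$, every $f \succeq \tau$ has $W_e$ inside $\Gamma^f$ or inside $\overline{\Gamma^f}$, with $|W_e| = m = q(e) + 1 > q(e)$. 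Hence $(\tau, \bar B) \in \+H_{\Gamma, q}$.

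If instead the search never succeeds, then every pattern on $P$ has a realization set that is small above $\sigma$; since $P$ is fixed there are only finitely many patterns, so by the smallness preservation property (Lemma~\ref{lem:big-subset}) the set $V$ of all $\rho \succeq \sigma$ on which $\Gamma$ converges on all of $P$ is small above $\sigma$ with some explicit finite bound. Every $f$ extending $\sigma$ with $\Gamma^f$ total lies in $[V]^\prec$, so a condition whose second coordinate contains (a closure of) $B \cup V$ forces $\Gamma^f$ to be partial, hence not $\EBI$ via $q$. The obstacle is that $B \cup V$ need only be small above $\sigma$ with a bound larger than $g(|\sigma|)$ --- the bounds add under Lemma~\ref{lem:big-subset} --- so this is not directly a legitimate second coordinate. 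The fix I would pursue is to pass to a much longer stem $\tau \succeq \sigma$: since $g$ is unbounded, $g(|\tau|)$ can be made to exceed that finite bound, and using the closure clause of Lemma~\ref{lem:small-set-closure} one selects $\tau$ so that the closure of $B \cup V$ is small above $\tau$ with a bound at most $g(|\tau|)$, yielding a legitimate condition extending $(\sigma, B)$ in $\+H_{\Gamma, q}$. Making this selection of $\tau$ go through --- this is the delicate combinatorial point --- and checking that the blind procedure defining $W_e$ is consistent with the (non-effective) choices made above, are where the work is concentrated; the remaining verifications are routine applications of the definitions and the two combinatorial lemmas of Section~\ref{sec:defs-and-combinatorics}.
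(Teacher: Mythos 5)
Your overall plan shares the paper's broad outline (either force $\Gamma^f$ partial, or pin a large $W_e$ into $\Gamma^f$ or its complement via the recursion theorem), but the mechanism is genuinely different: you fix a single block $P$ of $2m-1$ inputs and run a dichotomy over the $2^{2m-1}$ \emph{patterns} on $P$, whereas the paper first builds an infinite exactly-$6k$-bushy totality tree $T_\tau$ and then runs a \emph{fusion} argument (Claim~3.7) that processes one input bit at a time, repeatedly intersecting $4k$-bushy subtrees of $T_\tau$ and invoking the pigeonhole principle, so that every bushiness parameter stays a fixed multiple of $k$, independent of $q$.

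This independence is exactly what your approach lacks, and it is where the gap lies. In your ``search fails'' branch, $V$ is the union of $2^{2m-1}$ sets, each (say) $k$-small, so Lemma~\ref{lem:big-subset} only gives that $V$ is $N$-small for $N\approx 2^{2m-1}(k-1)+1$, and $m=q(e)+1$ is beyond your control. You acknowledge that $N$ may exceed $g(|\sigma|)$ and propose to pass to a longer stem $\tau$ with $g(|\tau|)\ge N$ lying outside the $N$-closure $C$ of $B\cup V$. But such a $\tau$ need not exist: when $g(|\sigma|)<N$, the fact that $C$ is $N$-small and $N$-closed above $\sigma$ only bounds the number of immediate extensions of any $\rho\notin C$ lying in $C$ by $N-1$, which does not preclude \emph{all} $g(|\rho|)$ of them being in $C$; concretely, if $\Gamma^\rho$ converges on $P$ for every $\rho$ properly extending $\sigma$, then $V$ is $N$-small above $\sigma$ (vacuously, since no $N$-bushy tree exists above $\sigma$ at all), yet $C$ contains every proper extension of $\sigma$ and there is no escape. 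So the dichotomy ``some pattern's realization set is big / the convergence set $V$ is usefully small'' is false at this level of generality. Attempting the alternative fix --- extending $\sigma$ \emph{before} the search so that $g(|\sigma'|)\ge N$ --- runs into a circularity you do not resolve: $N$ depends on $m=q(e)+1$, $e$ comes from the recursion theorem whose procedure must already know the stem $\sigma'$ above which it searches, and $\sigma'$ must moreover avoid the (generally non-r.e.) set $B$, which the procedure cannot compute. The paper sidesteps all of this by choosing $\tau$ \emph{non-effectively first} (Case~1: ``some $\tau$ admits fusion''), then applying the recursion theorem with $\tau$ hard-coded, which works precisely because the resulting trees $R_k$ are $2k$-bushy with $2k\le g(|\sigma|)$ regardless of how large $m=\max(q(e_0),q(e_1))$ turns out to be; the final condition is just $(\rho,B)$ for a leaf $\rho$ of $R_{2m+1}$ avoiding $B$, with no enlargement of the second coordinate. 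In short: the block/pattern dichotomy causes an exponential blow-up in the smallness budget that cannot be absorbed, and the fusion machinery of the paper's Claim~3.7 is not an optional refinement but the essential device that keeps the budget bounded. (Two smaller points: $|\dom\Gamma^\sigma|$ is not computable, so ``the first $2m-1$ positions beyond $|\dom\Gamma^\sigma|$'' is not a legitimate instruction inside the recursion-theorem procedure; and your Case (a) is fine as stated, including the pleasant simplification that one recursion-theorem index suffices where the paper uses two.)
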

			\begin{proof}
				Let $(\sigma, B)$ be any condition and suppose $B$ is $k$-small (and $k$-closed) above $\sigma$.
				By suitably extending $\sigma$, we may assume that $g(|\sigma|) \ge 8k$.

				Suppose first that there is a $\tau \notin B$ extending $\sigma$ such that for some $m$,
				\[
					C_m = \dset{\rho}{\Gamma^\rho(m)\converges}
				\]
				is $7k$-small above $\tau$.
				Then $(\tau, B \cup C_m)$ is a condition extending $(\sigma, B)$, and for every $f \in [\sigma] \setminus [B]^\prec$, $\Gamma^f$ is not total.
				So we assume from now on that for every $\tau \notin B$ extending $\sigma$ and every $m \in \omega$, $C_m$ is $7k$-big above $\tau$.

				It now follows that for every $\tau \notin B$ extending $\sigma$,
				there is an infinite exactly $6k$-bushy tree $T_\tau$ without leaves above $\tau$ such that for every $f \in [T_\tau]$, $\Gamma^f$ is total:
				Let $S_0$ consist of $\tau$ and its initial segments.
				Next, suppose we have already constructed a finite tree $S_n$ that is exactly $6k$-bushy above $\tau$ and such that
				for each leaf $\rho$ of this tree, $\rho \notin B$ and $\Gamma^\rho$ is defined up to $n-1$.
				By our assumption above, $C_{n}$ is $7k$-big above each leaf, so $C_{n}\setminus B$ is $6k$-big above each leaf by Lemma~\ref{lem:big-subset}.
				For a leaf $\rho$ of $S_n$, let $A_\rho$ be a finite exactly $6k$-bushy tree above $\tau$ with leaves in $C_n \setminus B$.
				We construct $S_{n+1}$ by appending $A_\rho$ to each leaf $\rho$ of $S_n$. Finally, let $T_\tau = \bigcup_{n \in \omega} S_n$.

				\begin{definition}
					Let $\tau$ be any extension of $\sigma$ that is not in $B$.
					We say $\tau$ \emph{admits fusion} if for infinitely many $m \in \omega$, for some $i \in \{0, 1\}$,
					\[
						\Delta_{\tau, m, i} = \dset{\rho \in T_\tau}{\Gamma^\rho(m) = i}
					\]
					is $4k$-big above $\tau$.
				\end{definition}

				\begin{claim}\label{clm:fusion}
					If $\tau$ admits fusion, then there is a subtree $T'$ of $T_\tau$ which is $2k$-bushy above $\tau$ and
					for infinitely many $m \in \omega$ there is an $i \in \{0, 1\}$ with $\Gamma^f(m) = i$ for all $f \in [T']$.
				\end{claim}
				\begin{proof}
					Let $I_0 \subseteq \omega$ be such that for all $l \in I_0$, either $\Delta_{\tau, l, 0}$ or $\Delta_{\tau, l, 1}$ is $4k$-big above $\tau$, and
					let $\Delta_l$ denote whichever one is. Let $S_0$ consist of $\tau$ and its initial segments, and note that $S_0$ is $2k$-bushy above $\tau$.

					Next, suppose that we have constructed a finite tree $S_k \subseteq T_\tau$, $2k$-bushy above $\tau$, and a subset $I_k$ of $\omega$ such that:
					\begin{enumerate}
						\item There are $n_0 < n_1 < \dots < n_{k-1}$ such that for each $i < k$, $\Gamma^\rho(n_i)$ is constant as $\rho$ ranges over the leaves of $S_k$.
						\item For all $l \in I_k$, there is a tree which is $4k$-bushy above $\tau$ and contains $S_k$, whose leaves are in $\Delta_l$.
					\end{enumerate}

					Let $n$ be the least element in $I_k$ greater than $n_{k - 1}$, and
					let $C$ be a finite $4k$-bushy tree above $\tau$ containing $S_k$ whose leaves are in $\Delta_n$.
					Now for any $l > n$ in $I_k$, if $F_l$ is any $4k$-bushy tree above $\tau$ containing $S_k$ with leaves in $\Delta_l$, then
					$F_l \cap C$ is a $2k$-bushy tree above $\tau$ that contains $S_k$. To see this, let $\rho \in F_l \cap C$.
					If $\rho$ has an immediate extension in $F_l \cap C$, then $\rho$ has $4k$ many extensions in each of $F_l$ and $C$.
					But $T_\tau$ is exactly $6k$-bushy above $\tau$, so at least $2k$ of these must be in $F_l \cap C$.

					It follows from the pigeonhole principle (note that $C$ is finite) that there is an infinite subset $I_{k+1}$ of $I_k$, such that for all $l \in I_{k+1}$,
					there are $4k$-bushy subtrees above $\tau$ with leaves in $\Delta_l$ that
					intersect $C$ in the \emph{same} $2k$-bushy subtree $S_{k+1}$ above $\tau$ that contains $S_k$.

					This completes the definition of the sequence $\langle S_k \rangle_{k \in \omega}$. Let $T' = \bigcup_{k \in \omega} S_k$. Then $T'$ is as desired.
				\end{proof}

				\paragraph{Case 1: Some $\tau \succeq \sigma$ admits fusion.}
				We begin by extending $\sigma$ to $\tau$ obtaining the condition $(\tau, B)$ (note that $\tau$ is by definition not in $B$).
				Claim~\ref{clm:fusion} implies that, uniformly in $k$, we can find
				a finite $2k$-bushy tree $R_k$ above $\tau$ such that for at least $k$ distinct inputs $m$,
				$\Gamma^\rho(m)$ is constant as $\rho$ ranges over the leaves of $R_k$.

				For $i \in \{0, 1\}$, let $W_{e_i}$ be the r.e. set defined as follows:
				If $m = \max(q(e_0), q(e_1))$\footnote{We use the recursion theorem here.}, let
				\[
					W_{e_i} = \dset{n}{\Gamma^\rho(n) = i \text{ for each leaf $\rho$ of $R_{2m + 1}$}}.
				\]
				It must now be the case that for some $i \in \{0, 1\}$, $|W_{e_i}| > m$. Suppose $i = 0$ (the argument for the other case is symmetric).
				Let $\rho \succeq \tau$ be a string in $R_{2m +1} \setminus B$ (note that $B$ is $k$-small above $\tau$).
				Then $(\rho, B)$ is a condition, and for all $f \in [\rho] \setminus [B]^\prec$, if $\Gamma^f$ is total, then $W_{e_0}$ is contained in its complement.

				\paragraph{Case 2: No extension of $\sigma$ admits fusion.}
				This means that for every extension $\tau$ of $\sigma$ such that $\tau \notin B$, there is an $m_\tau \in \omega$ such that
				for all $l \ge m_\tau$, both $\Delta_{\tau, l, 0}$ and $\Delta_{\tau, l, 1}$ are $4k$-small above $\tau$. Recall that $T_\tau$ is exactly $6k$-bushy above $\tau$. Therefore, $\Delta_{\tau, l, 0} \cup \Delta_{\tau, l, 1}$ is $6k$-big above $\tau$. By Lemma~\ref{lem:big-subset}, if one of these sets is $2k$-small above $\tau$, the other is $4k$-big, so both must be $2k$-big above $\tau$.

				Let $S_0$ consist of $\sigma$ and its initial segments. Note that no leaf of $S_0$ is in $B$.

				Proceeding by induction, suppose we have constructed a finite $k$-bushy tree $S_k \subseteq T_\sigma$ above $\sigma$ with the following properties:
				\begin{enumerate}
				 	\item There are $n_0 < n_1 <  \dots < n_{k-1}$ such that for every leaf $\rho$ of $S_k$, $\Gamma^\rho(n_i) = 0$ for each $i < k$.
				 	\item None of the leaves of $S_k$ is in $B$.
				 \end{enumerate}

				Let $n_k = \max\dset{m_\tau}{\text{$\tau$ a leaf of $S_k$}} + 1$.
				By the observation above, for each leaf $\tau$ of $S_k$, $\Delta_{\tau, n_k, 0}$ is $2k$-big above $\tau$, so
				$\Delta_{\tau, n_k, 0} \setminus B$ is $k$-big above $\tau$. Let $F_\tau$ be a finite $k$-bushy tree above $\tau$ with leaves in
				$\Delta_{\tau, n_k, 0} \setminus B$, and let $S_{k+1}$ be obtained from $S_k$ by extending each leaf $\tau$ of $S_k$ by $F_\tau$.

				Finally, let $T' = \bigcup_{k \in \omega} S_k$. Then for all $k$, for every $f \in [T']$, $\Gamma^f(n_k) = 0$.
				A strategy similar to the one employed in case 1 now diagonalizes against the pair $(\Gamma, q)$. This concludes the proof of the lemma.
			\end{proof}

			To conclude the proof of Theorem \ref{thm:slow-no-ebi},
			let $B_{\DNR^X}$ be the set of finite strings that cannot be extended to a $\DNR$ relative to $X$ and
			let $\+G$ be any filter containing $(\estr, B_{\DNR^X})$ that meets $\+H_{\Gamma, q}$ for each functional $\Gamma$ and recursive function $q$.
			Then $f_\+G$ is a $g$-bounded $\DNR$ relative to $X$ and does not compute an effectively bi-immune set.

	\section{Traceability and lowness}
		There is more than one way to define effective immunity relative to an oracle. We focus on a partial relativization, motivated by the fact that under this definition, a \ML\ random set relative to any oracle $X$ will be effectively immune relative to $X$ via the function $h(e)=e+c$ for some $c \in \omega$.
		\begin{definition}
			An infinite set $R$ is \emph{effectively immune relative to $G$} if there is a recursive function $h$ such that for all $e$, if $W_e^G\subseteq R$ then $|W^G_e|\le h(e)$.
		\end{definition}
		\begin{definition}
			A set $G\in\Low(\MLR,\EBI)$ if each $\MLR$ set $R$ is $\EBI$ relative to $G$.
		\end{definition}
		\begin{definition}
			A recursive enumerable (r.e.) \emph{trace} $T$ is a sequence of sets $T^{[e]}=W_{g(e)}$, $e\in\omega$
			such that $|W_{g(e)}|\le h(e)$ for all $e$, where $g$ and $h$ are recursive functions.
			For a function $f$, we say that $T$ \emph{traces} $f$ on input $n$ if $f(n)\in T^{[n]}$.
			A set $G$ is \emph{jump traceable} if there is a r.e.~trace $T$ such that for all $e$, if $\varphi_e^G(e)\downarrow$ then
			$\varphi^G_e(e)\in T^{[e]}$.
		\end{definition}
		Theorem \ref{bjoern} gives a contrast between $\MLR$ and $\EBI$.

	\begin{theorem}\label{bjoern}
		Each jump traceable Turing degree is $\Low(\MLR,\EBI)$.
	\end{theorem}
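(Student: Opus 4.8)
The plan is to prove directly that if $G$ is jump traceable then every \ML\ random set $R$ is effectively bi-immune relative to $G$; since which sets are r.e.\ in $G$ depends only on the Turing degree of $G$, this yields the theorem. First fix an r.e.\ trace $\{T^{[k]}\}_{k\in\omega}$ together with recursive functions $g$ and $h$ witnessing that $G$ is jump traceable, so that $T^{[k]}=W_{g(k)}$, $|T^{[k]}|\le h(k)$ (we may assume $h\ge 1$), and $\varphi_e^G(e)\downarrow$ implies $\varphi_e^G(e)\in T^{[e]}$. Fix also a recursive bijection $\gamma$ between $\omega$ and the finite subsets of $\omega$. The first step is to manufacture a \ML\ test from the trace: for $c\in\omega$ set $L(k,c)=h(k)+k+c$ and
\[
U_c=\bigcup_{k\in\omega}\dset{X\in 2^\omega}{\exists D\ \big(\gamma^{-1}(D)\in T^{[k]}\ \wedge\ |D|\ge L(k,c)\ \wedge\ (D\subseteq X\ \vee\ D\cap X=\emptyset)\big)}.
\]
Each $U_c$ is $\Sigma^0_1$ uniformly in $c$, and the contribution of level $k$ to $\mu(U_c)$ is at most $|T^{[k]}|\cdot 2\cdot 2^{-L(k,c)}\le 2h(k)2^{-h(k)}\cdot 2^{-k-c}\le 2^{-k-c}$, so $\mu(U_c)\le 2^{1-c}$ and $\{U_{c+1}\}_{c\in\omega}$ is a \ML\ test. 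Since $R$ is \ML\ random, fix once and for all a number $c$ with $R\notin U_c$.

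The heart of the argument is to capture a long initial segment of each $W_e^G$ inside the trace while keeping the index of the governing jump computation independent of how far we look. By the recursion theorem (applied with $G$ as an oracle) we obtain a recursive function $e\mapsto i_e$ such that $\varphi_{i_e}^G(i_e)$ proceeds as follows: it first computes $N_e:=L(i_e,c)=h(i_e)+i_e+c$ from the fixed recursive $h$, the constant $c$, and its own index $i_e$; it then enumerates $W_e^G$; and as soon as $N_e$ distinct elements have appeared, it halts and outputs $\gamma^{-1}(D)$, where $D$ consists of the first $N_e$ of them. Since $N_e$ is recomputed at run time rather than hard-coded, $i_e$ does not depend on the (possibly enormous) value $N_e$, so $N_e$ is a genuine recursive function of $e$. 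Put $\tilde h(e)=N_e-1$.

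To complete the proof, suppose towards a contradiction that $W_e^G\subseteq R$ and $|W_e^G|>\tilde h(e)$, that is, $|W_e^G|\ge N_e$. Then $\varphi_{i_e}^G(i_e)\downarrow$ with value $\gamma^{-1}(D)$, where $D$ is the set of the first $N_e$ elements of $W_e^G$; hence $|D|=N_e=L(i_e,c)$ and $D\subseteq W_e^G\subseteq R$. By jump traceability $\gamma^{-1}(D)\in T^{[i_e]}$, and then $D$ witnesses, at level $k=i_e$, that $R\in U_c$ --- contradicting the choice of $c$. Thus $W_e^G\subseteq R$ implies $|W_e^G|\le\tilde h(e)$; the symmetric argument using the disjunct $D\cap X=\emptyset$ shows that $W_e^G\subseteq\bar R$ implies $|W_e^G|\le\tilde h(e)$. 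Hence $R$ is $\EBI$ relative to $G$, witnessed by $\tilde h$, and since $R$ was an arbitrary \ML\ random set, $G$ (and therefore its degree) lies in $\Low(\MLR,\EBI)$.

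I expect the recursion-theorem step to be the only genuine obstacle. The naive approach would instead hard-code into the $e$-th machine a threshold $m$ chosen large enough that a cylinder of measure $2^{-m}$ outweighs the number of relevant trace elements; but hard-coding $m$ inflates the machine's index, and therefore inflates the very trace bound $h(\text{index})$ that $m$ is meant to outweigh, and since $h$ may grow faster than any iterated exponential there is then no consistent choice of $m$. Letting the machine recompute its threshold from its own index dissolves this circularity, because the bound that must be beaten is pinned down before the threshold is.
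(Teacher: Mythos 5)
Your proof is correct, and it takes essentially the same route as the paper's: fix the jump trace, use the recursion theorem to obtain indices that can compute their own trace bound, and fold the resulting candidate sets into a Martin-L\"of test so that randomness of $R$ forbids any $W_e^G \subseteq R$ (or $\subseteq \bar R$) from exceeding the computed threshold. The bookkeeping differs slightly: you parameterize the test directly by trace index $k$ and make the threshold $L(k,c)=h(k)+k+c$ explicit, whereas the paper defines a function $f$ implicitly via the inequality $h(p(e))\,2^{-(f(e)+1)}\le 2^{-e}$ and indexes the test by $e>c$; you also build the complement case ($D\cap X=\emptyset$) directly into the test, where the paper leaves it implicit (relying on $\bar A$ being MLR). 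A pleasant side effect of your version is that the witnessing function $\tilde h$ works for all $e$ with no finite adjustment, at the cost of $\tilde h$ depending on the level $c$ at which $R$ escapes the test and hence on $R$ itself --- which is permitted by the definition of $\Low(\MLR,\EBI)$, but means you don't get the paper's (near-)uniform witness. Your closing paragraph correctly diagnoses the circularity that the recursion theorem is resolving, and matches the paper's reason for invoking it.
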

	\begin{proof}
		Let $G$ be jump traceable via $h$, and let $J^G$ denote the diagonal partial recursive function relative to $G$.

		We define a recursive function $f$ knowing its index in advance by the recursion theorem. Let $\varphi$ be the function partial recursive in $G$ that on input $e$, waits for $W^G_e$ to enumerate at least $f(e) + 1$ elements, and then outputs the natural number that encodes the finite set $B_e$ consisting of the first $f(e) + 1$ of these. Next, let $p$ be a recursive function such that $J^G \circ p = \varphi$. Note that $p$ can be obtained uniformly from an index for $f$. Now define $f$ so that
		\[h(p(e))\, 2^{- (f(e) + 1)} \le 2^{-e}.\]

		We have an r.e. trace $S^{[p(e)]}$ for (the code for) $B_e$, and there are at most $h(p(e))$ many elements in it. Let $B_e^{(i)}$ denote the $i$th candidate for $B_e$ if it exists, for $i<h(p(e))$.
		Then let
		\[
			U_c = \{
				A: (\exists e>c)(\exists i<h(p(e)))(B_e^{(i)}\downarrow\subseteq A)
			\}
		\]
		Then
		\[
		 	\mu(U_c)\le\sum_{e>c} h(p(e))\, 2^{-(f(e)+1)}\le \sum_{e>c}2^{-e}=2^{-c}.
		\]
		If $A$ is $\MLR$ then there exists $c$ such that for all $e>c$ and $i$, it is not the case that $B_e^{(i)}\downarrow\subseteq A$.
		Thus for all $e\ge c$, if $W_e^G\subseteq A$ then $W_e^G$ has size at most $f(e)$.
		Thus $A$ is EBI relative to $G$.	\end{proof}

\section{Canonical immunity}
		It is natural to next consider lowness notions associated with Schnorr randomness. This idea leads us to a new notion of immunity.

		A \emph{canonical numbering of the finite sets} is a surjective function $D:\omega\rightarrow \{A:A\subseteq\omega\text{ and $A$ is finite}\}$ such that
		$\{(e,x): x\in D(e)\}$ is recursive and the cardinality function $e\mapsto |D(e)|$, or equivalently, $e\mapsto\max D(e)$, is also recursive.
		We write $D_e=D(e)$.
		\begin{definition}
			$R$ is \emph{canonically immune} if $R$ is infinite and
			there is a recursive function $h$ such that for each canonical
			numbering of the finite sets $D_e$, $e\in\omega$,
			we have that for all but finitely many $e$, if $D_e\subseteq R$
			then $|D_e|\le h(e)$.
		\end{definition}
		\begin{theorem}\label{lateApril}
			Schnorr randoms are canonically immune.
		\end{theorem}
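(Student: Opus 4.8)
The plan is to fix once and for all a sufficiently fast-growing recursive function, say $h(e) = 2e$, and to prove that every Schnorr random set $R$ is canonically immune via this $h$. That $R$ is infinite is a standard fact, obtained from the observation that the class of finite sets is Schnorr null via the test $U_n = \dset{A}{(\exists m \ge n)\ A \cap [m, 2m+1) = \emptyset}$, which is uniformly $\Sigma^0_1$, has $\mu(U_n) \le \sum_{m\ge n} 2^{-(m+1)} = 2^{-n}$, and has uniformly computable measure since the tail past any finite stage is effectively small. So the content of the theorem is the cardinality bound.

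For that, suppose toward a contradiction that $D_e$ ($e\in\omega$) is a canonical numbering of the finite sets for which $D_e \subseteq R$ and $|D_e| > h(e)$ hold for infinitely many $e$. Because the membership relation $\dset{(e,x)}{x\in D_e}$ and the function $e \mapsto \max D_e$ are recursive, the map $e\mapsto D_e$ is recursive, so the clopen sets $[D_e] = \dset{A}{D_e\subseteq A}$ form a uniformly computable sequence with $\mu([D_e]) = 2^{-|D_e|}$. Set
\[
	U_n = \bigcup\dset{[D_e]}{e\ge n \text{ and } |D_e| > h(e)}.
\]
Then $(U_n)_{n\in\omega}$ is uniformly $\Sigma^0_1$. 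Since $|D_e| > h(e)$ forces $\mu([D_e]) \le 2^{-h(e)-1} = 2^{-2e-1}$, we get $\mu(U_n) \le \sum_{e\ge n}2^{-2e-1} \le 2^{-n}$, and the dyadic rationals $\mu\bigl(\bigcup\dset{[D_e]}{n\le e\le s,\ |D_e|>h(e)}\bigr)$ approximate $\mu(U_n)$ to within $2^{-s}$, so $\mu(U_n)$ is computable uniformly in $n$. Hence $(U_n)$ is a Schnorr test. But the hypothesis yields, for every $n$, some $e\ge n$ with $D_e\subseteq R$ and $|D_e|>h(e)$, whence $R\in[D_e]\subseteq U_n$; so $R\in\bigcap_n U_n$, contradicting the Schnorr randomness of $R$. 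Thus only finitely many $e$ satisfy $D_e\subseteq R$ and $|D_e|>h(e)$, and since $D$ was an arbitrary canonical numbering, $R$ is canonically immune via $h$.

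The argument is almost routine; the single step I would take care over is that $\mu(U_n)$ is a genuine computable real and not merely a left-c.e.\ one, which is precisely where the at-least-linear growth of $h$ enters, forcing the tails $\sum_{e>s}2^{-h(e)-1}$ to vanish effectively. I would also state explicitly the convention being used: a Schnorr test is a uniformly $\Sigma^0_1$ sequence $(U_n)$ with $\mu(U_n)\le 2^{-n}$ and $\mu(U_n)$ uniformly computable, which defines the same randomness notion as requiring $\mu(U_n)=2^{-n}$ exactly.
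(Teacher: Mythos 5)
Your proof is correct and takes essentially the same approach as the paper: both build the Schnorr test $U_c = \dset{X}{(\exists e > c)\,(|D_e|\ge 2e \wedge D_e\subseteq X)}$, use the recursiveness of $e\mapsto|D_e|$ to verify the measure bound and computability, and conclude from Schnorr randomness. You additionally spell out the infinitude of $R$ and the uniform computability of $\mu(U_c)$, which the paper leaves implicit.
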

		\begin{proof}
			Fix a canonical numbering of the finite sets, $\{D_e\}_{e \in \omega}$.
			Define $U_c = \{ X : (\exists e > c) \big( |D_e| \geq 2e \wedge D_e \subset X \big) \}$.
			Since $e \mapsto |D_e|$ is recursive, $\mu(U_c)$ is recursive and bounded by $2^{-c}$.
			Thus, the sequence $\{U_c\}_{c \in \omega}$ is a Schnorr test.
			If $A$ is a Schnorr random, then $A \in U_c$ for only finitely many $c \in \omega$.
			We conclude that $A$ is canonically immune.
		\end{proof}
		\begin{theorem}
			Each canonically immune set is immune.
		\end{theorem}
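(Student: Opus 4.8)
The plan is to prove the contrapositive: if $R$ is not immune, then $R$ is not canonically immune. If $R$ is finite this is immediate, since canonical immunity requires $R$ to be infinite; so assume $R$ is infinite and fix an infinite r.e. set $W \subseteq R$. I then need to show that no recursive function $h$ can witness the canonical immunity of $R$: that is, for each recursive $h$ I will exhibit a canonical numbering $\{D_e\}_{e\in\omega}$ of the finite sets for which $D_e \subseteq R$ and $|D_e| > h(e)$ for infinitely many $e$.

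Fix a recursive $h$. Since $W$ is infinite and r.e., it has an infinite recursive subset $V = \{v_0 < v_1 < v_2 < \cdots\}$ (enumerate $W$ as $a_0, a_1, \dots$ and keep $a_m$ only when it exceeds all previously kept elements; infinitude of $W$ makes this total, and membership in $V$ is decidable by computing $v_0, v_1, \dots$ until a value $\ge x$ appears). Fix some canonical numbering $\{E_j\}_{j\in\omega}$ of the finite sets, and define
\[
D_e = \begin{cases} E_j, & e = 2j,\\ \{v_0, v_1, \ldots, v_{h(e)}\}, & e = 2j+1. \end{cases}
\]
I then check that $\{D_e\}$ is a canonical numbering: it is surjective onto the finite sets because the even-indexed values alone already are; the relation $\{(e,x) : x \in D_e\}$ is recursive, since for even $e$ this follows from $\{E_j\}$ being canonical, and for odd $e$ one computes $v_0,\ldots,v_{h(e)}$ (possible as $V$ and $h$ are recursive) and tests membership; and $e \mapsto |D_e|$ is recursive, being $|E_j|$ on even indices and $h(e)+1$ on odd indices.

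Finally, for every odd $e$ we have $D_e \subseteq V \subseteq W \subseteq R$ while $|D_e| = h(e)+1 > h(e)$, and there are infinitely many odd $e$; hence $h$ fails to witness the canonical immunity of $R$. As $h$ was an arbitrary recursive function, $R$ is not canonically immune, completing the contrapositive.

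The construction has essentially no hard step; the only point requiring care — the ``obstacle'', such as it is — is respecting both effectivity clauses in the definition of a canonical numbering simultaneously, in particular keeping $e \mapsto |D_e|$ recursive. This is exactly why the sabotage sets are built from a recursive subset $V$ of $W$ with a prescribed size $h(e)+1$, rather than, say, the first $h(e)+1$ elements to appear in an enumeration of $W$ (which would also work, but makes the size bookkeeping less transparent). Padding with the standard numbering $\{E_j\}$ on the even indices is simply the cheapest way to guarantee surjectivity without disturbing the other two conditions.
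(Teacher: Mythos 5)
Your proof is correct and follows essentially the same strategy as the paper: given a non-immune set, pass to an infinite recursive subset, and build a canonical numbering whose sets on a designated residue class are initial segments of that subset of size $h(e)+1$. The only difference is cosmetic — the paper begins directly with ``suppose $A$ has an infinite recursive subset $R$,'' leaving the standard step from an infinite r.e.\ subset to an infinite recursive one implicit, whereas you spell it out and also make the surjectivity/effectivity checks on the constructed numbering explicit.
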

		\begin{proof}
			Suppose $A$ has an infinite recursive subset $R$. Let $h$ be any recursive function.
			Let $R_n$ denote the set of the first $n$ elements of $R$, and let $\{D_e: e \in \omega\}$ be a canonical numbering of the finite sets such that $D_{2n} = R_{h(2n)+1}$ for all $n \in \omega$.
			For all $n$, $D_{2n}\subseteq R\subseteq A$ and $|D_{2n}| = h(2n)+1 > h(2n)$, and so $h$ does not witness the canonical immunity of $A$.
		\end{proof}

		We now show that canonically immune is the ``correct'' analogue
		of effectively immune.

		\begin{definition}[Kjos-Hanssen, Merkle, and Stephan \cite{MR2813422}]
			A function is strongly nonrecursive (SNR) if it differs from
			each recursive function on all but finitely many inputs.
		\end{definition}

		\begin{theorem}\label{may3}
			Each canonically immune set computes a strongly nonrecursive
			function.
		\end{theorem}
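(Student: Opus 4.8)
The plan is to follow the pattern of Jockusch's proof that effectively immune sets compute $\DNR$ functions, while using the ``for all but finitely many $e$'' slack in the definition of canonical immunity to sidestep the recursion theorem entirely. Fix a set $R$ that is canonically immune, fix a recursive function $h$ witnessing this, and fix an injective canonical numbering $\{E_m\}_{m\in\omega}$ of the finite sets (for instance, the one induced by binary expansions of natural numbers), so that every finite set $F$ has a unique code $m$ with $E_m = F$. Let $r_0 < r_1 < \cdots$ be the increasing enumeration of $R$, which is uniformly recursive in $R$. Define $f \le_T R$ by letting $f(n)$ be the unique code of the finite set consisting of the first $h(2n)+1$ elements of $R$; that is, $E_{f(n)} = \{r_0, r_1, \ldots, r_{h(2n)}\}$.

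To prove that $f$ is strongly nonrecursive, suppose toward a contradiction that $g$ is a recursive function with $g(n) = f(n)$ for infinitely many $n$. Define a numbering $\{D_e\}_{e\in\omega}$ by setting $D_{2n} = E_{g(n)}$ and $D_{2n+1} = E_n$. Since $g$ and the numbering $\{E_m\}$ are recursive, both the relation $x \in D_e$ and the cardinality function $e \mapsto |D_e|$ are recursive, and the odd-indexed entries already exhaust the finite sets, so $\{D_e\}$ is a canonical numbering. For each of the infinitely many $n$ with $g(n) = f(n)$ we have
\[
	D_{2n} = E_{g(n)} = E_{f(n)} = \{r_0, r_1, \ldots, r_{h(2n)}\} \subseteq R,
\]
while $|D_{2n}| = h(2n)+1 > h(2n)$. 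Hence there are infinitely many $e$ with $D_e \subseteq R$ and $|D_e| > h(e)$, contradicting the assumption that $h$ witnesses the canonical immunity of $R$. Therefore $f$ agrees with every recursive function on at most finitely many inputs, and since $f \le_T R$, the set $R$ computes a strongly nonrecursive function.

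The argument has no genuine obstacle; the only point requiring a little care is checking that the auxiliary numbering $\{D_e\}$ really is a canonical numbering. It must be surjective onto all finite sets and have a recursive cardinality function even though the diagonalization information has been loaded into its even-indexed entries, and this is arranged simply by reserving the odd indices to list every finite set. Note that, unlike in Jockusch's argument, $f$ is defined outright from $R$ and $h$ with no self-reference, so no appeal to the recursion theorem is necessary.
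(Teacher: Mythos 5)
Your proof is correct and follows essentially the same route as the paper: define $f(n)$ to be a code for the first $h(2n)+1$ elements of $R$, and given a recursive $g$ agreeing with $f$ infinitely often, build a canonical numbering with $D_{2n}$ equal to the set coded by $g(n)$ to contradict the canonical immunity of $R$. The only difference is that you spell out explicitly how to make $\{D_e\}$ surjective with recursive cardinality (by reserving the odd indices for a fixed canonical numbering), whereas the paper simply asserts that such a numbering exists.
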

		\begin{proof}
			Let $R$ be canonically immune as witnessed by the recursive function $h$. Define $f(e)$ to be (a code for) the first $h(2e) + 1$ many elements of $R$, and note that $f$ is recursive in $R$. We claim that $f$ is strongly nonrecursive.

			Suppose that the recursive function $g$ is infinitely often equal to $f$. Let $\{D_e : e \in \omega\}$ be any canonical numbering of finite sets such that for all $e$, $D_{2e}$ is the finite set coded by $g(e)$. We now have that for infinitely many $e$, $D_{2e}$ is the set consisting of the first $h(2e) + 1$ many elements of $R$, a contradiction.
		\end{proof}

		Interestingly, Theorem \ref{may3} shows that we can strengthen
		``$D_e\subseteq R$''
		to
		``$D_e$ is an initial segment of $R$''.

		\begin{corollary}
			The following are equivalent for an oracle $A$:
			\begin{enumerate}
				\item $A$ computes a canonically immune set,
				\item $A$ computes an SNR function,
				\item $A$ computes an infinite subset of a Schnorr random.
			\end{enumerate}
		\end{corollary}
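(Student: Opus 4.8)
The three statements are connected by the cycle $(3)\Rightarrow(1)\Rightarrow(2)\Rightarrow(3)$; the first two links are immediate, and the third carries all the weight. For $(3)\Rightarrow(1)$: if $A$ computes an infinite set $S$ with $S\subseteq R$ for some Schnorr random $R$, then by Theorem~\ref{lateApril} $R$ is canonically immune, say via the recursive function $h$, and the same $h$ witnesses the canonical immunity of $S$, since $S\subseteq R$ gives, for every canonical numbering $\{D_e\}_{e\in\omega}$ and all but finitely many $e$, that $D_e\subseteq S$ implies $D_e\subseteq R$ and hence $|D_e|\le h(e)$. For $(1)\Rightarrow(2)$: by Theorem~\ref{may3} every canonically immune set computes an SNR function, so $A$ does too, by transitivity of $\le_T$.

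The remaining implication $(2)\Rightarrow(3)$ is the substantial one. Given an SNR function $f\le_T A$, we must produce an infinite $S\le_T A$ contained in some Schnorr random. Observe first that $S$ is forced to be immune: if $S$ had an infinite recursive subset $S'$, then $\{X : S\subseteq X\}$ would be contained in the null $\Pi^0_1$ class $\{X : S'\subseteq X\}$, so no superset of $S$ could be Kurtz random, let alone Schnorr random. Hence a crude sparse coding of $f$ into $S$ will not suffice; the full strength of the strong nonrecursiveness of $f$ must be used. The plan is to fix a partition (recursive in $f$) of $\omega$ into consecutive blocks $I_0,I_1,\dots$ of growing length, to put exactly one element $s_k$ into each $I_k$ so that $S=\{s_k\}$ has density $0$, and to choose the position of $s_k$ within $I_k$ so that it encodes $f\restriction k$; then any canonical finite subset of $S$ of size exceeding $n$ reveals $f\restriction n$ through its largest element. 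One then builds a real $R$ with $S\subseteq R$, the bits of $R$ off $S$ being free (they form a density-one set), and the aim is to commit them so that $R$ passes every Schnorr test.

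The obstacle is that, in contrast to the \ML{} setting, there is no universal Schnorr test: one cannot defeat a single object but must handle all recursive Schnorr tests simultaneously with the one function $f$. I would do this by interleaving one requirement per candidate test and arranging the bookkeeping --- fixing the index of the function under construction in advance via the recursion theorem, as in Jockusch's proof that $\DNR$ functions compute effectively immune sets --- so that capture of $R$ by some Schnorr test would allow one to read off, from that recursive test, a single recursive function equal to $f$ on infinitely many inputs, contradicting that $f$ is SNR. The delicate point, where I expect the real difficulty to lie, is precisely this last extraction: assembling the infinitely many certificates (the large canonical subsets, or the test successes) into one \emph{total} recursive function that is infinitely often correct, rather than a partial one. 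This is the Schnorr-theoretic analogue of the corresponding step in the \ML{} correspondence between $\DNR$ functions, effectively immune sets, and infinite subsets of \ML{} randoms; $(2)\Rightarrow(3)$ can likely also be assembled from the treatment of SNR functions in \cite{MR2813422}. With the three implications in hand, the cycle closes and all three conditions are equivalent.
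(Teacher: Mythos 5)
Your treatments of $(3)\Rightarrow(1)$ and $(1)\Rightarrow(2)$ are correct and match the paper; in fact for $(3)\Rightarrow(1)$ you are slightly more careful than the paper, which cites Theorem~\ref{lateApril} directly without noting (as you do) that canonical immunity passes down from $R$ to any infinite subset $S\subseteq R$ because $D_e\subseteq S$ forces $D_e\subseteq R$.

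The gap is exactly where you suspected it: $(2)\Rightarrow(3)$. Your plan --- code $f$ into a sparse set $S$ and then try to fill in the density-one complement of $S$ so that the resulting $R$ passes every Schnorr test --- does not get off the ground, and your own discussion identifies why: there is no universal Schnorr test, so you have no single object to diagonalize against, and your proposed ``read off a recursive function infinitely often equal to $f$'' extraction is only a hope, not an argument. The paper does not attempt a direct construction at all. Instead it uses a structural dichotomy for SNR degrees from \cite{MR2813422}: \emph{every SNR function is either high or computes a DNR function}. In the high case, high degrees compute Schnorr randoms by Nies--Stephan--Terwijn \cite{MR2140044} (and a Schnorr random is trivially an infinite subset of itself). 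In the DNR case, DNR degrees compute infinite subsets of \ML{} randoms by Greenberg--Miller \cite{MR2518817}, and every \ML{} random is Schnorr random. Either branch yields an infinite subset of a Schnorr random computable from $A$, closing the cycle. So your instinct that the missing implication ``can likely be assembled from the treatment of SNR functions in \cite{MR2813422}'' was correct, but what is needed from that paper is the high-or-DNR dichotomy, not a coding construction, and you would also need \cite{MR2140044} and \cite{MR2518817} to finish the two branches.
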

		\begin{proof}
			(1) implies (2) is proved in Theorem \ref{may3}.
			(2) implies (3) follows from older results:
			each SNR either is high or computes a DNR \cite{MR2813422},
			hence either computes a Schnorr random \cite{MR2140044} or computes an infinite subset of an
			MLR \cite{MR2518817}, hence either way computes an infinite subset of a Schnorr random.
			(3) implies (1) is proved in Theorem \ref{lateApril}.
		\end{proof}

\section{A class between EI and EBI}

			\begin{theorem}
				There is a bi-immune set such that it is effectively immune while its complement is not.
			\end{theorem}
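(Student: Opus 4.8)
The plan is to build $A$ directly by a finite-injury priority construction, modeled on Sacks's construction \cite{SacksSimpleSet} of a simple set that is not effectively simple, but augmented with a third family of requirements that keeps $A$ thin enough to be effectively immune. For every $e$ we meet three requirements: $E_e$, asserting that if $|W_e| > e$ then $W_e \cap \bar A \neq \emptyset$; $Q_e$, asserting that if $W_e$ is infinite then $W_e \cap A \neq \emptyset$; and $N_e$, asserting that if $\varphi_e$ is total then there is an index $i$ with $W_i$ finite, $W_i \subseteq \bar A$, and $|W_i| > \varphi_e(i)$. Meeting all the $E_e$ makes $A$ effectively immune via the recursive bound $f(e) = e$ (hence immune); meeting all the $Q_e$ makes $\bar A$ immune; and meeting all the $N_e$ prevents any total recursive function from witnessing effective immunity of $\bar A$. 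Both $A$ and $\bar A$ come out infinite, since infinitely many $W_e$ are infinite, so infinitely many $E_e$ and $Q_e$ act, each eventually parking a distinct element on its side. Thus $A$ is bi-immune and effectively immune while $\bar A$ is not effectively immune, as required.

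Order the requirements $E_0, Q_0, N_0, E_1, Q_1, N_1, \dots$. At each stage a requirement may reserve a finite set of numbers for $A$ or for $\bar A$, and a reservation may be overridden by a higher-priority requirement, which injures its owner. When $E_e$ first sees $|W_{e,s}| > e$ with no element of $W_{e,s}$ currently in $\bar A$, it enumerates into $\bar A$ the least element of $W_{e,s}$ not reserved for $A$ by a higher-priority requirement, and reserves it for $\bar A$; such an element exists because only $Q_0, \dots, Q_{e-1}$ reserve for $A$ above $E_e$, each holding at most one reservation, while $|W_{e,s}| \geq e+1$. The requirement $Q_e$ acts symmetrically, enumerating into $A$ the least element of $W_{e,s}$ not reserved for $\bar A$ by a higher-priority requirement; only finitely many requirements lie above $Q_e$ and each holds a finite reservation, so once those stabilize such an element appears, since $W_e$ is infinite. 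Finally $N_e$ proceeds through attempts $k = 0, 1, 2, \dots$: by the recursion theorem we fix in advance a recursive array of indices $i(e,k)$, and on attempt $k$ the requirement $N_e$ waits for $\varphi_e(i(e,k))$ to converge to a value $v$, then picks $v+1$ currently undecided numbers, enumerates them into $W_{i(e,k)}$, and reserves them for $\bar A$; if one of these is later overridden, $N_e$ passes to attempt $k+1$.

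The verification is routine finite-injury bookkeeping: proceeding down the priority list, each requirement acts only finitely often — $E_e$ and $Q_e$ are injured only by the finitely many higher-priority requirements reserving on the opposite side, and $N_e$ only by $Q_0, \dots, Q_e$ — so every reservation stabilizes and every requirement is met. (If $W_e \subseteq A$ then $E_e$ never placed an element of $W_e$ into $\bar A$, so it never triggered and $|W_e| \leq e$; if $W_e$ is infinite then $Q_e$ eventually places an element of $W_e$ into $A$; and if $\varphi_e$ is total then $N_e$ settles on some attempt $k_e$ whose set $W_{i(e,k_e)}$ has $\varphi_e(i(e,k_e))+1$ elements, all permanently in $\bar A$.) The subtle point, and the main obstacle, is the tension between the $N_e$'s and immunity of $\bar A$: the union $\bigcup_e W_{i(e,k_e)}$ of the finite sets the $N_e$ finally park inside $\bar A$ is infinite whenever infinitely many $\varphi_e$ are total, and were it r.e. it would be an infinite r.e. subset of $\bar A$. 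This never happens, because the map $e \mapsto k_e$ is non-computable, so that union is not r.e.; and the $Q$-requirements enforce this consistency actively — if the union happened to equal some infinite $W_j$, then $Q_j$ would eject one of its elements into $A$, which merely injures some $N_e$ on a non-final attempt and forces it to a later one, whose fresh elements $Q_j$ (acting only once) cannot disturb. As in Sacks's argument, this is made to work entirely by the priority ordering: $N_e$ lies below only finitely many requirements that reserve for $A$, so it is injured only finitely often and its final attempt survives.
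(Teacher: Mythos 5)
Your proof is essentially correct, but it takes a genuinely different route from the paper's. The paper constructs the characteristic function $g$ of $A$ directly in $\omega$-many stages, deciding at stage $2e+1$ whether $|W_e| > 2e+1$ and whether $W_e$ is infinite, and at stage $2e+2$ whether $\varphi_e$ is total; since these questions are answered outright (not approximated), there is no injury, and each requirement is handled once and for all. The price is that the construction is not effective (it runs relative to roughly $\emptyset''$), which is harmless since the theorem only asserts existence. You instead run a fully recursive finite-injury priority argument with three requirement families $E_e$, $Q_e$, $N_e$, using stage approximations $W_{e,s}$ and waiting for $\varphi_e$ to halt; this buys you a $\Delta^0_2$ witness $A$, strictly more than the paper claims, at the cost of the usual priority bookkeeping. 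Both proofs invoke the recursion theorem to manufacture the r.e.\ sets that defeat candidate witnesses for effective immunity of $\bar A$ (the paper does this silently when it ``picks'' $W_a$ with $\varphi_e(a) < |W_a| < \infty$ and $W_a$ disjoint from the current domain), and both use essentially the same witness $f(e) = O(e)$ for effective immunity of $A$, justified by the same counting argument (at most $e$ higher-priority elements on the $A$-side versus $|W_e| > e$). Two small points you should tighten: first, say explicitly how undecided numbers are ultimately settled (e.g.\ defaulted to $\bar A$) and note that the $Q_e$'s then still make $A$ infinite via pairwise-disjoint infinite $W_e$'s; second, the aside about $\bigcup_e W_{i(e,k_e)}$ being non-r.e.\ because $e \mapsto k_e$ is non-computable is neither a proof nor needed — immunity of $\bar A$ follows directly from meeting all $Q_j$, which your finite-injury verification already establishes.
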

			\begin{proof}
				We build a set $A$ in stages by describing its characteristic function, $g$.

				\textbf{Stage 0:}  Define $g_0$ to be the function with empty domain.

				\textbf{Stage $\mathbf{2e + 1}$:}  Define $g_{2e+1} \upharpoonright \mbox{dom}(g_{2e}) = g_{2e}$.
				Let $m = min(\mathbb N \setminus \mbox{dom}(g_e))$ and set $g_{2e+1}(m) = 1$.
				If $|W_e| > 2e+1$ and there is no $a \in W_e \cap \mbox{dom}(g_{2e})$ such that $g_{2e}(a) = 0$,
				pick $x \in W_e \setminus \mbox{dom}(g_{2e})$ and set $g_{2e+1}(x) = 0$.
				If $W_e$ is infinite, select a $y \neq x$ such that $y \in W_e \setminus \mbox{dom}(g_e)$ and set $g_{2e+1}(y) = 1$.

				\textbf{Stage $\mathbf{2e + 2}$:}  Define $g_{2e+1} \upharpoonright \mbox{dom}(g_{2e}) = g_{2e}$.
				If $\phi_e$ is total, pick an r.e.~set $W_a$ such that $\phi_e(a) < |W_a| < \infty$ and $|W_a| \cap \mbox{dom}(g_{2e+1}) = \emptyset$.
				Set $g_{2e+2}(x) = 0$ for all $x \in W_a$.

				Notice that there are no more than $2s$ elements $x$ such that $g_s(x) = 1$. So either it is possible to pick an $x$ as in the odd stages, $2e+1$, whenever $|W_e| > 2e+1$, or
				there is already an element of the domain of $g_{2e}$ which is in $W_e$ on which $g_{2e}$ takes the value 0.
				Let $g = \bigcup_{s \in \mathbb N} g_s$.
				Observe that $g$ is total and $\{0,1\}$-valued.
				Let $A$ be the set whose characteristic function is $g$.
				The effective immunity of $A$ is witnessed by $f(x) = 2x+1$ and $\overline{A}$ is clearly immune, however,
				for any total function $h$ there is an r.e.~set $W_a$ such that $h(a) < |W_a|$ and $W_a \subseteq \overline{A}$.
				Thus, $\overline{A}$ is not effectively immune.
			\end{proof}

		\section{Boldface complexity}

		\begin{theorem}\label{thm:fei-closed}
		Let $f$ be a recursive function. The class of reals that are effectively immune via $f$ is closed.
		\end{theorem}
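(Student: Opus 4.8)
The plan is to show that the complement of this class is open in Cantor space. Write $\+C_f$ for the class of reals $X \subseteq \omega$ that are effectively immune via $f$; thus $X \notin \+C_f$ precisely when there is some $e$ with $W_e \subseteq X$ and $|W_e| > f(e)$. The strategy is to argue that any such ``bad witness'' for $X$ can be replaced by a \emph{finite} one contained in $X$, and that a finite bad witness already forces an entire basic open neighborhood of $X$ out of $\+C_f$.

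First I would reduce to finite witnesses. Fix $X \notin \+C_f$ together with an index $e$ witnessing this. If $W_e$ happens to be finite there is nothing to do. If $W_e$ is infinite, I would invoke the recursion theorem: uniformly in $i$ (by running the enumeration of $W_e$ until $f(i)+1$ distinct elements have appeared) one gets a recursive function $h$ such that $W_{h(i)}$ is the set consisting of the first $f(i)+1$ elements enumerated into $W_e$, and a fixed point $e'$ of $h$ then satisfies $W_{e'} = W_{h(e')}$. Since $W_e$ is infinite, $W_{e'}$ is well defined, finite, of size exactly $f(e')+1$, and contained in $W_e \subseteq X$. Either way, we obtain a finite $F \subseteq X$ with $F = W_{e'}$ and $|F| > f(e')$ for some index $e'$.

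Next I would localize this obstruction. Put $n = \max(F)+1$. For any real $Y$ with $Y \upharpoonright n = X \upharpoonright n$ we have $F \subseteq Y$ (since $F \subseteq [0,n)$ and $F \subseteq X$), hence $W_{e'} \subseteq Y$ with $|W_{e'}| > f(e')$, so $Y \notin \+C_f$. Thus the basic clopen set determined by $X \upharpoonright n$ is contained in the complement of $\+C_f$. As $X \notin \+C_f$ was arbitrary, the complement of $\+C_f$ is open, i.e., $\+C_f$ is closed.

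The only delicate point, and the sole place where recursion theory enters, is the reduction to a finite witness: an infinite r.e.\ subset of $X$ that is too large need not be detectable on any finite initial segment of $X$, so one genuinely must manufacture a finite r.e.\ subset of $X$ whose \emph{index} is small enough relative to $f$. The self-referential choice ``take the first $f(e')+1$ elements of $W_e$, where $e'$ is my own index'' supplied by the recursion theorem accomplishes exactly this, and does so for every recursive $f$. This step is genuinely needed: if infinite witnesses could not be replaced in this way, then every finite initial segment of such an $X$ would itself lie in $\+C_f$, and the class would fail to be closed.
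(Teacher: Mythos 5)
Your proof is correct and follows exactly the same route as the paper's: reduce an infinite bad witness $W_e$ to a finite one $W_{e'}$ of size $f(e')+1$ via the recursion theorem, then observe that a finite bad witness rules out an entire basic clopen neighborhood, so the complement of the class is open. Your write-up is merely more explicit than the paper's about how the recursion theorem is invoked (defining $h$ uniformly and taking a fixed point), which is a fine expansion of the same argument.
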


		\begin{proof}
		Suppose that $A$ is not effectively immune via $f$. Then there is a $e$ such that $W_e$ is a subset of $A$ and $|W_e| > f(e)$. If $W_e$ is a finite set, then there is an initial segment $\sigma$ of $A$ such that $W_e$ is contained in any set whose characteristic function extends $\sigma$, and so no extension of $\sigma$ is effectively immune via $f$. So suppose that $W_e$ is infinite. By the recursion theorem, there exists an $e'$ such that $W_{e'}$ consists of the first $f(e') + 1$ elements of $W_e$. Thus, in this case there is also an initial segment $\sigma$ of $A$ such that any set whose characteristic function extends $\sigma$ contains $W_{e'}$, and is therefore not effectively immune via $f$.
		\end{proof}

		Recall that a set of reals is $F_\sigma$ if it is a countable union of closed sets.

		\begin{corollary}
		The class of $\EBI$ reals is $F_\sigma$.
		\end{corollary}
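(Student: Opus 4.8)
The plan is to stratify the $\EBI$ reals by a witnessing function and invoke Theorem~\ref{thm:fei-closed}. First I would note that a real $X$ is $\EBI$ precisely when it is $f$-$\EBI$ for some recursive $f$: if $f_1$ witnesses the effective immunity of $X$ and $f_2$ that of $\bar X$, then the pointwise maximum $f = \max(f_1, f_2)$ is recursive and witnesses the effective immunity of both $X$ and $\bar X$. Since there are only countably many recursive functions, the class of $\EBI$ reals equals
\[
	\bigcup_{f \text{ recursive}} \+E_f, \qquad \+E_f := \dset{X}{X \text{ is } f\text{-}\EBI},
\]
and it suffices to show that each $\+E_f$ is closed.

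Next I would observe that $\+E_f = \+C_f \cap \{X : \bar X \in \+C_f\}$, where $\+C_f$ denotes the class of reals effectively immune via $f$. Theorem~\ref{thm:fei-closed} says that $\+C_f$ is closed. The complementation map $c \colon X \mapsto \bar X$ of Cantor space is a homeomorphism (it is continuous and equal to its own inverse), so $c^{-1}(\+C_f) = \{X : \bar X \in \+C_f\}$ is closed as well. Hence $\+E_f$ is an intersection of two closed sets, and therefore closed; the class of $\EBI$ reals is then a countable union of closed sets, i.e.\ $F_\sigma$.

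There is no substantial obstacle here: the argument is essentially bookkeeping on top of Theorem~\ref{thm:fei-closed}. The one point requiring a word of care is that the condition ``effectively immune via $f$'' used in Theorem~\ref{thm:fei-closed} is the purely syntactic requirement $(\forall e)(W_e \subseteq X \to |W_e| \le f(e))$ and does not by itself force $X$ to be infinite, whereas $\EBI$ does. This is harmless: if $X$ is finite, then $\bar X$ is infinite and contains arbitrarily large (indeed infinite) r.e.\ subsets, so $\bar X$ is effectively immune via no $f$, and symmetrically if $X$ is cofinite; hence every element of $\bigcup_f \+E_f$ is genuinely bi-immune, and the equality above is correct.
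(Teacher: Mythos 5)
Your argument is correct and is essentially the (unwritten) proof the paper has in mind: write $\EBI$ as the countable union over recursive $f$ of the $f$-$\EBI$ reals, note that each such class is the intersection of $\{X : X \text{ is EI via } f\}$ with its preimage under the complementation homeomorphism, and invoke Theorem~\ref{thm:fei-closed}. Your closing remark about the syntactic reading of ``effectively immune via $f$'' (so that finite/cofinite reals cannot sneak into $\+E_f$) is a sound and worthwhile clarification, not a departure from the paper's approach.
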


		Additionally, the class is no simpler:

		\begin{theorem}
		The class of $\EBI$ reals is Wadge complete for $F_{\sigma}$.
		\end{theorem}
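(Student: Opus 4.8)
The plan is to combine the preceding corollary, that the $\EBI$ reals form an $F_\sigma$ set, with a continuous reduction witnessing $F_\sigma$-hardness. It suffices to build a continuous $\Phi\colon 2^\omega\to 2^\omega$ with $\Phi^{-1}(\EBI)=Q$, where $Q=\dset{x\in 2^\omega}{x(n)=1\text{ for only finitely many }n}$ is the standard $F_\sigma$-complete set: since Wadge reductions compose, every $F_\sigma$ set then reduces to the $\EBI$ reals, which together with the corollary gives completeness.

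To define $\Phi$, fix a \ML\ random $R$; by the introduction $R$ is $\EBI$, say via $f_R$, so $R$ and $\overline R$ are both effectively immune via $f_R$. Fix pairwise disjoint infinite recursive sets $Z_{k,j}$ for $0\le j\le k<\omega$, and, by the $s$-$m$-$n$ and recursion theorems, a recursive function $(k,j)\mapsto m_{k,j}$ with
\[
W_{m_{k,j}}=\begin{cases}
\text{the first }\varphi_j(m_{k,j})+1\text{ elements of }Z_{k,j}, & \varphi_j(m_{k,j})\converges,\\
\emptyset, & \text{otherwise.}
\end{cases}
\]
Set $\Phi(x)=R\cup\bigcup\dset{W_{m_{k,j}}}{x(k)=1\text{ and }0\le j\le k}$. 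Since the $Z_{k,j}$ are disjoint, for each coordinate $i$ the set $\dset{x}{i\in\Phi(x)}$ is $2^\omega$ if $i\in R$, is $\dset{x}{x(k)=1}$ if $i$ belongs to $W_{m_{k,j}}$ for some (necessarily unique) pair $(k,j)$, and is empty otherwise; each of these is clopen, so $\Phi$ is continuous. (That $\Phi$ need not be recursive is irrelevant for a Wadge reduction.)

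For the verification: if $x\in Q$ then only finitely many of the finite sets $W_{m_{k,j}}$ are included, so $\Phi(x)=R\cup F$ with $F$ finite; hence $\overline{\Phi(x)}\subseteq\overline R$ is effectively immune via $f_R$, and $\Phi(x)$ is effectively immune via $e\mapsto f_R(r(e))+\abs F$ where $W_{r(e)}=W_e\setminus F$, so $\Phi(x)$ is $\EBI$. If $x\notin Q$, let $\varphi_e$ be any total recursive function; since $x$ has infinitely many $1$s there is $k\ge e$ with $x(k)=1$, and then $W_{m_{k,e}}\subseteq\Phi(x)$ with $\abs{W_{m_{k,e}}}=\varphi_e(m_{k,e})+1>\varphi_e(m_{k,e})$, so $\varphi_e$ does not witness the effective immunity of $\Phi(x)$; as $\varphi_e$ was arbitrary, $\Phi(x)$ is not effectively immune, hence not $\EBI$. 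Thus $\Phi^{-1}(\EBI)=Q$.

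The point needing care — and the reason for this particular design — is that for $x\in Q$ one must \emph{also} keep $\overline{\Phi(x)}$ effectively immune. This forces $\Phi$ to perturb $R$ only by \emph{adjoining} finitely many points (so that $\overline{\Phi(x)}\subseteq\overline R$): one cannot instead reserve a ``scratch'' region on which $\Phi(x)$ is eventually constant, since such a region would place an infinite recursive set into $\Phi(x)$ or into its complement and destroy bi-immunity. In turn, that is why an active requirement at $k$ must diagonalize against all of $\varphi_0,\dots,\varphi_k$ simultaneously, so that an $x$ with infinitely many $1$s defeats every candidate witness function while an $x$ with finitely many $1$s moves $\Phi(x)$ off $R$ in only finitely many coordinates. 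Beyond this, the remaining work is routine: the uniform recursion-theorem definition of the $m_{k,j}$ and the two effective-immunity bounds displayed above.
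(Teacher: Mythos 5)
Your proof is correct, and the construction is genuinely different from the paper's. The paper builds the reduction inductively on strings: a monotone map $f\colon 2^{<\omega}\to 2^{<\omega}$ that, on reading a $0$, extends one bit further toward some chosen $\EBI$ real $g_\tau$, and on reading the $k$th $1$, jumps to a string $\alpha^k_\tau$ beyond which no real is $\varphi_{e_k}$-$\EBI$ (such strings exist by the closedness result, Theorem~\ref{thm:fei-closed}). Your $\Phi$ is instead a single global definition: one fixed Martin-L\"of random (hence $\EBI$) real $R$, perturbed only by adjoining the finite r.e.\ sets $W_{m_{k,j}}$ sitting in disjoint recursive scratch regions, with the $W_{m_{k,j}}$ built directly by the recursion theorem so that $W_{m_{k,e}}$ defeats $\varphi_e$ whenever it gets included. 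The underlying diagonalization is the same -- the paper's obstacle strings $\alpha^i_\tau$ are manufactured inside the proof of Theorem~\ref{thm:fei-closed} by exactly the recursion-theorem trick you inline -- but the bookkeeping is packaged differently, and your version has a mild advantage on the positive side: since $\Phi(x)\supseteq R$ and $\Phi(x)\setminus R$ is finite for $x\in Q$, the witness for $\overline{\Phi(x)}\subseteq\overline R$ carries over verbatim and $\Phi(x)$ needs only a bounded correction, whereas the paper's bit-at-a-time scheme implicitly requires the choice $\tau\mapsto g_\tau$ to be coherent (e.g.\ leftmost) so that the limit along a tail of $0$s really is a single $\EBI$ real. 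Both proofs go through.
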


		\begin{proof}
		Let $A \subset 2^\omega$ be the set of reals that are eventually zero. It is well-known that $A$ is Wadge complete for the $F_\sigma$ sets (see, for example, \cite{Kechris}, Exercise 21.17). We construct a continuous $h: 2^\omega \rightarrow 2^\omega$ such that $X \in A$ iff $h(X)$ is $\EBI$, showing that $A$ is Wadge reducible to the class of $\EBI$ reals.

		We first define a function $f: 2^{<\omega} \rightarrow 2^{<\omega}$ recursively. Let $e_0, e_1, \ldots$ be an increasing list of all codes for total functions.  Also, for each $\tau \in 2^{<\omega}$, let $g_{\tau}$ be an $\EBI$ real which has $\tau$ as an initial segment and let $\alpha_{\tau}^i$ be an extension of $\tau$ such that no real extending $\alpha_{\tau}^i$ is $\phi_{e_i}$-$\EBI$. Note that $\alpha_{\tau}^i$ exists by the argument in the proof of Theorem~\ref{thm:fei-closed}.

		Let $f(\estr) = \estr$.  Suppose $\sigma \in 2^{<\omega}$, let $n = |\sigma|$ and $k$ be the number of bits of $\sigma$ which are $1$.  Given $f(\sigma) = \tau$, we define $f(\sigma \concat 0) = g_{\tau} \upto (|\tau|+1)$ and $f(\sigma \concat 1) = \alpha_{\tau}^k$.  Finally, define $h$ so that the $n$th bit of $h(x)$ is the $n$th bit of $f(x\upto (n+1))$.
		\end{proof}

		\section{Acknowledgements}
		The authors would like to thank Uri Andrews, Daniel Turetsky, Linda Westrick, Rohit Nagpal, and Ashutosh Kumar for helpful discussions.

	\bibliographystyle{plain}

\end{document}